\newtheorem{theorem}{Theorem}
\newtheorem{lemma}[theorem]{Lemma}
\theoremstyle{definition}
\newtheorem{definition}{Definition}
\theoremstyle{remark}
\newtheorem{remark}{Remark}
\title{Implications between Induction Principles for $\mathbb{N}$ in Peano Arithmetic}
\author{João Alves Silva Júnior}
\date{\today}
\begin{document}

\maketitle

\begin{abstract}
In introductory books about natural numbers, a common kind of assertion -- often left as exercise to the reader -- is that certain forms of induction on $\mathbb{N}$ (regular/ordinary, complete/strong) are equivalent one to each other and to the well-ordering principle. This means that if P1 and P2 are two of these principles, then, under all the other usually adopted postulates for the set of natural numbers (e.g., Peano axioms other than the induction axiom), P1 implies P2 and vice-versa. In this paper, we shows that, for a reasonable formalization, based on Peano arithmetic, some of the alleged implications between these principles hold only if an additional, independent condition is assumed, namely: every nonzero natural number is a successor. This condition is a consequence of the regular induction principle, but not of other induction principles. So, it is necessary to review all currently accepted implications between induction principles and similar statements in $\mathbb{N}$. From a list of 9 properties, usually considered induction principles (or equivalent to), we identify all valid and ``almost valid'' implications between them, as well as disprove all invalid implications by counterexamples, which consist of structures similar to Peano models, but not necessarily satisfying the Peano's induction axiom, and having an order relation adequately subordinated to the zero and the successors. Final remarks are made about what can and what cannot be proved if we weaken the assumptions about the order relation. The aim of this paper is to shed light on the need for more caution when alleging equivalence between induction principles for $\mathbb{N}$.
\end{abstract}

\section{Introduction.}

An exercise in~\cite[Chapter~4, Problem 2]{RS14} asks:

\begin{quote}
``(a) Using the principle of mathematical induction, prove the principle of complete mathematical induction. (b) Conversely, show that the principle of mathematical induction logically follows from the principle of complete mathematical induction.''
\end{quote}
Earlier in the chapter, the author, Raymond Smullyan, defines these principles:
\begin{quote}
``The \emph{principle of mathematical induction} is that if a property holds for the number $0$ and if it never holds for any number $n$ without holding for $n + 1$, then it must hold for all natural numbers. (\dots) A variant of the principle of mathematical induction is the \emph{principle of complete mathematical induction}, which is this: Suppose a property $P$ of natural numbers is such that for every natural number $n$, if $P$ holds for all natural numbers less than $n$, then $P$ holds for $n$. Conclusion: $P$ holds for all natural numbers.''
\end{quote}
The above mentioned principle of mathematical induction (not the complete) is also known as \emph{regular induction principle}~\cite{OMC} or \emph{ordinary induction principle}~\cite[p.~17]{LNC08}.

At this point, it is important to clarify the goal of the quoted exercise from \cite{RS14}. If we interpret it as actually relating specific principles about the set of natural numbers, then both the alleged implications are trivially true, because that principles are true assertions about $\mathbb{N}$. But this is not the real purpose of that exercise. Instead, its intended objective is to prove that the proposed implications are still valid if we replace the known, concrete structure $(\mathbb{N}, 0, S, <)$, where $S :n\mapsto n+1$ is the successor function and $<$ is the usual strict order on $\mathbb{N}$, by an abstract one, $(X, 0_X, S_X, <_X)$, that satisfies all the axioms of $\mathbb{N}$ (after adaptation for $X$), except possibly the induction axiom, which is the regular induction principle. We pretend stay as close as possible to the Peano axioms, which form the most commonly utilized axiomatic framework to the arithmetic, but since the complete/strong induction principles use an ordering, we have to consider as well some other axioms that say what is an order relation and how it must interact to the zero and the successor function with the view to form a ``$\mathbb{N}$-like structure'', but without induction.

In the sequel, we use the notations
\begin{equation}\label{E:X<p}
  X_{<p} = \{x \in X \mid x<_X p\},
\end{equation}
\begin{equation}\label{E:SX[Y]}
  S_X[Y] = \{S_X(y) \mid y \in Y\}.
\end{equation}

\begin{definition}\label{D:PreIndPeanoModel}
A \emph{pre-inductive Peano model} is a set $X$ equipped with an element $0_X$, called the \emph{zero} of $X$, a function $S_X: X \to X$, called the \emph{successor function} of $X$, and a binary relation $<_X$ on $X$, called the \emph{order} of $X$, such that
\begin{enumerate}[(a)]
  \item \label{Itm:SXinj} $S_X$ is injective, i.e., $\forall x,x' \in X, \ x \neq x' \, \Rightarrow \, S_X(x) \neq S_X(x')$.
  \item \label{Itm:0Xnotsuc} $0_X$ is not a successor, i.e., $0_X \notin S_X[X]$.
  \item \label{Itm:<Xlinord} $<_X$ is a strict order relation, i.e.,
  \begin{enumerate}[(i)]
    \item $<_X$ is irreflexive: $\forall x \in X, \ x \not<_X x$.
    \item $<_X$ is transitive: $\forall x,y,z \in X, \ ((x <_X y)\wedge(y <_X z)) \, \Rightarrow \, x <_X z$.
  \end{enumerate}
  \item \label{Itm:<0Srelation} $<_X$ is subordinated to $0_X$ and $S_X$, in the following manner:
  \begin{enumerate}[(i)]
    \item $X_{<0_X} = \varnothing$.\label{Itm:X<0Xempty}
    \item For all $p \in X$, $X_{<S_X(p)} = \{p\} \cup X_{< p}$\label{Itm:X<SXp}.
  \end{enumerate}
\end{enumerate}
\end{definition}

We regard Definition~\ref{D:PreIndPeanoModel} an adequate set of assumptions under what it makes sense to evaluate wether or not the regular induction principle implies the complete induction principle and/or vice-versa.

\begin{definition}\label{D:MRIP,MCIP}
Let $\mathcal{P} = (X,0_X,S_X,<_X)$ be a pre-inductive Peano model. We say that $\mathcal{P}$ satisfies the
\begin{enumerate}[(a)]
  \item \emph{regular induction principle} if
  \[ \forall Y \subseteq X, \ ((0_X \in Y)\wedge(S_X[Y] \subseteq Y)) \, \Rightarrow \, Y = X. \]
  \item \emph{complete induction principle} if
  \[  \forall Y \subseteq X, \ (\forall p \in X, \ X_{< p} \subseteq Y \, \Rightarrow \, p \in X) \, \Rightarrow Y = X. \]
\end{enumerate}
\end{definition}

The set $Y$ in Definition~\ref{D:MRIP,MCIP} plays the role of the property $P$ in the statement of Problem 2 from~\cite[Chapter~4]{RS14}: $Y = \{x \in X \mid P(x)\}$. So, part (a) of Definition~\ref{D:MRIP,MCIP} is equivalent to ``if a property holds for the number $0$ and if it never holds for any number $n$ without holding for $n + 1$, then it must hold for all natural numbers''. And part (b) is equivalent to ``Suppose a property $P$ of natural numbers is such that for every natural number $n$, if $P$ holds for all natural numbers less than $n$, then $P$ holds for $n$. Conclusion: $P$ holds for all natural numbers.''.

According to Definition~\ref{D:MRIP,MCIP}, the claim that the complete induction principle implies the regular induction principle means that every pre-inductive Peano model that satisfies the first of theses principles also satisfies the second one.

The adopted nomenclature in texts about induction principles on $\mathbb{N}$ is not always the same. There are two induction principles that are often called ``complete induction principle'' or ``strong induction principle'', but some books or papers approach only one of these, while other sources mention only the other principle, and frequently the names of them are switched when one moves from a source to another. When both principles are considered in the same work, they are usually treated as logically equivalent. In this paper, we prove that these forms of induction are not equivalent, so we need to distinguish them carefully. So far, we have defined only the regular (ordinary) and the complete induction principles. The strong induction principle is defined in the next section.

In one of the steps of the proposed solution for the part (b) of that exercise, Smullyan says:
\begin{quote}
``For the case of $n = 0$, we are already given that $P$ holds for $0$. And so suppose $n \neq 0$. Then $n=m+1$ for the number $m=n-1$.''
\end{quote}
This argument presumes that the set of natural numbers is formed \emph{only} by $0$ and the successors, that is,
\begin{equation}\label{E:N=|0|US[N]}
\mathbb{N} = \{0\} \cup \{ S(n) \mid n \in \mathbb{N} \}.
\end{equation}
This is not usually considered an axiom for $\mathbb{N}$, but a consequence of the regular induction principle, which cannot be used there, to avoid circularity. Can we deduce the presumed equality, \eqref{E:N=|0|US[N]}, from the complete induction principle and the Peano axioms other than the induction axiom? The answer is no, as we show below. Worse yet, we prove that the suggested implication, `complete induction' $\Rightarrow$ `regular induction', does not hold in general.

We want to construct a pre-inductive Peano model that satisfies the complete induction principle but does not satisfy the regular induction principle. \label{Pg:Model1Begin} It suffices to take $X$ as being the ordinal number $\omega + \omega$, accompanied with its usual order, its zero ($\varnothing$) and its sucessor function ($S(x) = x \cup \{x\}$). Alternatively, consider the sets
\begin{align}
  A = \{ a_n \mid n \in \mathbb{N} \} \subset \mathbb{Q}, \qquad &\text{where } a_n = -1 + \sum_{k = 0}^{n}\frac{1}{2^{k}}, \label{E:CInotimpRI:A} \\
  B = \{ b_n \mid n \in \mathbb{N} \} \subset \mathbb{Q}, \qquad &\text{where } b_n = \sum_{k = 0}^{n}\frac{1}{2^{k}},\label{E:CInotimpRI:B}
\end{align}
\begin{equation}\label{E:CInotimpRI:X}
X = A \cup B,
\end{equation}
and the function
\begin{align}
  S_X: X &\to X \notag \\
      x &\mapsto S_X(x) = \begin{cases}
                          a_{n+1} & \mbox{if } x = a_n, \\
                          b_{n+1} & \mbox{if } x = b_n
                        \end{cases} \label{E:CInotimpRI:S}
\end{align}
(see Fig.~\ref{F:omegaplusomega}). Let $<_X$ be the usual order of $\mathbb{Q}$ restricted to $X$ and consider $0_X = 0$, the natural number zero. It is straightfoward to verify that the structure $\mathcal{P} = (X,0_X,S_X,<_X)$ is a pre-inductive Peano model.\label{Pg:Model1End}

\begin{figure}[ht]
  \centering
  \includegraphics[width=12cm]{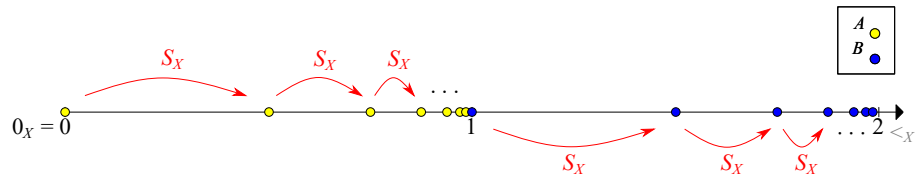}
  \caption{A counterexample for the implication `complete induction' $\Rightarrow$ `regular induction'. The pictured points represent elements of $X = A \cup B$, where $A$ and $B$ are defined by~\eqref{E:CInotimpRI:A}  and~\eqref{E:CInotimpRI:B}.}\label{F:omegaplusomega}
\end{figure}

Although we are utilizing $\mathbb{N}$ to construct a model with the intention of studying logical relations between properties of $\mathbb{N}$, there is no circularity here, because the set of natural numbers can be constructed directly from sets, independently of our model or the conclusions we want to draw from it.

Now, observe that the above constructed structure $\mathcal{P}$ satisfies the complete induction principle. In fact, let $Y \subseteq X$ be arbitrary and suppose that for all $p \in Y$, if $X_{<p} \subseteq Y$, then $p \in Y$ (induction hypothesis). Since $a_0 = 0 \in X$ is minimal (i.e., $X_{< 0} = \varnothing$), we have, by vacuity, that $X_{< a_0} = X_{<0} \subseteq Y$. So, by induction hypothesis, $a_0 \in Y$. Hence, $X_{< a_1} = \{a_0\} \subseteq Y \, \therefore \, a_1 \in Y$ and then $X_{< a_2} = \{a_0, a_1\} \subseteq Y \, \therefore \, a_2 \in Y$, etc. In general, for all $n \in \mathbb{N}$, if $a_k \in Y$ for each $k \in \{0, 1, \dots, n - 1\}$, then $X_{< a_n} = \{a_0, a_1, \dots, a_{n - 1}\} \subseteq Y$; and from this, it follows, by induction hypothesis, that $a_n \in Y$. Thus, by the complete induction principle for $\mathbb{N}$ (which we may use here, without circularity), we deduce that $a_n \in Y$ for all $n \in \mathbb{N}$. Therefore, $X_{<b_0} = A \subseteq Y$. From this, by an analogous reasoning, one can prove that
\begin{align*}
  b_0 \in Y \quad &\therefore \quad X_{<b_1} = A \cup \{b_0\} \subseteq Y \\
  \therefore \quad  b_1 \in Y \quad &\therefore \quad X_{<b_2} = A \cup \{b_0, b_1\} \subseteq Y\\
  \therefore \quad b_2 \in Y \quad &\therefore \quad X_{<b_3} = A \cup \{b_0, b_1, b_2\} \subseteq Y\\
                                   &\ \vdots
\end{align*}
so that $X = A \cup B \subseteq Y \, \therefore \, Y = X$. This concludes the proof that $\mathcal{P}$ satisfies the complete induction principle.

However, this model $\mathcal{P}$ does not satisfy the regular induction principle, because the subset $A \subseteq X$ contains $0_X$ as element and is closed under $S_X$ (i.e., $S_X[A] \subseteq A$), but $A \neq X$. Then, there is some $Y$ (namely, $Y = A$) such that the implication
\[((0_X \in Y)\wedge(S_X[Y] \subseteq Y)) \, \Rightarrow \, Y = X\]
is false.

As a result, we have that, in general, the complete induction principle does not imply the regular induction principle. So, the Problem 2(b) from \cite[Chapter~4]{RS14} is, at least, inaccuratedly stated. This is not exclusively a problem of Smullyan's book. There is a plenty of other sources with the same fail, e.g.: \cite[pp.~3, 62]{BCC08}, \cite[p.~11]{HHS12}, \cite[p.~17]{LNC08}, \cite[pp.~124-125]{PE82}. Also, a similar error often occurs when trying to relate these induction principles with the well-ordering principle for $\mathbb{N}$, e.g.: \cite[Chapter~4, Problem 3]{RS14}, \cite[p.~17]{PJC98}, \cite[p.~107]{SHW17}, \cite[pp.~124-125]{PE82}.

In the two next sections, we turn our attention to determine precisely which induction principles for $\mathbb{N}$ are equivalent, which implications between them are valid, which ones are invalid (in general), and what conditions make these false implications valid.

\section{Valid and Almost Valid Implications}

Throughout this section, we consider an arbitrary pre-inductive Peano model $\mathcal{P} = (X,0_X,S_X,<_X)$.

In addition to the notations already utilized in the previous section, we define $\leq_X$ as being the binary relation given by
\begin{equation}\label{E:<=X}
\forall x,y \in X, \quad  x \leq_X y \quad \Leftrightarrow \quad ((x <_X y)\vee(x = y))
\end{equation}
and $X_{\leq p}$ as the set
\begin{equation}\label{E:X<=p}
  X_{\leq p} = \{ x \in X \mid x \leq_X p\}.
\end{equation}
So, for all $p \in X$,
\begin{equation}\label{E:|X<=p|=|X<p|U|p|}
X_{\leq p} = X_{< p} \cup \{p\}
\end{equation}
and, by Definition~\ref{D:PreIndPeanoModel}(\ref{Itm:<0Srelation})(ii),
\begin{equation}\label{E:|X<=S(p)|=|X<=p|U|S(p)|}
X_{\leq S_X(p)} = X_{\leq p} \cup \{S_X(p)\} = X_{< p} \cup \{p,\, S_X(p)\}.
\end{equation}

\begin{definition}\label{D:IndPrinciples}
We define the following properties (induction principles) for $\mathcal{P}$.
\begin{description}
  \item[RI] (Regular Induction) \emph{For all $Y \subseteq X$, if $Y$ contains $0_X$ and is closed under $S_X$, then $Y = X$.} Symbolically,
     \[\forall Y \subseteq X, \ ((0_X \in Y) \wedge (S_X[Y] \subseteq Y)) \, \Rightarrow \, Y = X.\]
  \item[CI] (Complete Induction) \emph{Given $Y \subseteq X$, suppose that every $p \in X$ is such that if all $x \in X$ with $x <_X p$ belong to $Y$, then $p$ belongs to $Y$. Therefore, $Y = X$.} In symbols:
  \[\forall Y \subseteq X, \ (\forall p \in X,\ X_{<p} \subseteq Y  \Rightarrow p \in Y) \, \Rightarrow \, Y = X.\]
    \item[SI] (Strong Induction) \emph{Given $Y \subseteq X$, suppose that $0_X \in Y$ and that every $p \in X$ is such that if all $x \in X$ with $x \leq_X p$ belong to $Y$, then $S_X(p)$ belongs to $Y$. Therefore, $Y = X$.} In symbols:
  \[\forall Y \subseteq X, \ ((0_X \in Y)\wedge(\forall p \in X,\ X_{\leq p} \subseteq Y \Rightarrow S_X(p) \in Y)) \, \Rightarrow \, Y = X.\]
  \item[WO] (Well-Ordering) \emph{Every nonempty subset of $X$ has a minimum element (with respect to $<_X$).} Symbolically,
  \[\forall Y \subseteq X, \ Y \neq \varnothing \, \Rightarrow \, (\exists m \in Y, \, \forall y \in Y,\ m \leq_X y ). \]
  \item[WFO] (Well-Foundedness of Order) \emph{Every nonempty subset of $X$ has a minimal element with respect to $<_X$. } Symbolically,
      \[\forall Y \subseteq X, \ Y \neq \varnothing \, \Rightarrow \, (\exists m \in Y, \, \forall y \in Y,\ y \not<_X m ), \]
      where $y \not<_X m$ stands for $\neg (y <_X m)$, the negation of $y <_X m$.
  \item[WFS] (Well-Foundedness of Successor) \emph{Every nonempty subset of $X$ has a minimal element with respect to $S_X$.} Symbolically,
      \[\forall Y \subseteq X, \ Y \neq \varnothing \, \Rightarrow \, (\exists m \in Y, \, \forall y \in Y,\ S_X(y) \neq m). \]
  \item[FDO] (Finite Descent of Order) \emph{Suppose a subset $Y \subseteq X$ is such that for all $y \in Y$, there exists a $y' \in Y$ such that $y' <_X y$. Therefore, $Y = \varnothing$.} In symbols:
      \[ \forall Y \subseteq X, \ (\forall y \in Y, \, \exists y' \in Y, \ y' <_X y) \, \Rightarrow \, Y = \varnothing. \]
  \item[FDS] (Finite Descent of Successor) \emph{Suppose a subset $Y \subseteq X$ is such that for all $y \in Y$, there exists a $y' \in Y$ such that $S_X(y') = y$. Therefore, $Y = \varnothing$.} In symbols:
      \[ \forall Y \subseteq X, \ (\forall y \in Y, \, \exists y' \in Y, \ S_X(y') = y) \, \Rightarrow \, Y = \varnothing. \]
\end{description}
\end{definition}

Now, given a binary relation $\square_X$ on $X$ and an element $p \in X$, let $X_{\square p}$ denote the set of all $x \in X$ such that $x \ \square_X\ p$:
\begin{equation}\label{E:Xsquarep}
 X_{\square p} = \{x \in X \mid x \ \square_X\ p \}.
\end{equation}
This notation generalizes the previously defined $X_{<p}$ and $X_{\leq p}$. As a further example, by vieweing the successor function $S_X: X \to X$ as a binary relation on $X$, with
\[ \forall x,p \in X, \qquad x \ S_X\ p \quad \Leftrightarrow \quad S_X(x) = p,\]
we have that, for all $p \in X$, the set
\[X_{S p} = \{x \in X \mid x \ S_X\ p \} = \{x \in X \mid S_X(x) = p \} \]
is empty if $p = 0_X$ and is a singleton if $p \in S_X[X]$.

\begin{remark}\label{Rem:PrinciplesForms}
Some of the principles stated in Definition~\ref{D:IndPrinciples} may be expressed as particular cases of a single statement form, as we show below.
\begin{enumerate}[(a)]
  \item \label{Itm:Rem:PrinciplesForms:RInec} If $\mathbf{RI}$ holds, then
\begin{equation}\label{E:RIsufcond}
\forall Y \subseteq X, \ (\forall p \in X,\ X_{S p} \subseteq Y \Rightarrow p \in Y) \, \Rightarrow \, Y = X.
\end{equation}
  (Note the similarity with $\mathbf{CI}$. Later, in Theorem~\ref{T:(Almost)ValidImplications}, we refer to this property as $\mathbf{IPS}$: Induction Principle of $S_X$.) In fact, by Definition~\ref{D:PreIndPeanoModel}(\ref{Itm:0Xnotsuc}), $X_{S 0_X} = \varnothing$, so that, for every set $Y$, we have the inclusion $X_{S 0_X} \subseteq Y$ and, hence, the equivalence
  \[ (0_X \in Y) \quad \Leftrightarrow \quad (X_{S 0_X} \subseteq Y \Rightarrow 0_X \in Y).\]
  On the other hand, since $S_X$ is injective (Definition~\ref{D:PreIndPeanoModel}(\ref{Itm:SXinj})), for each $p \in S_X[X]$, there is a unique $p' \in X$ such that $S_X(p') = p$. So, $X_{S p} = \{p'\}$ and, consequently,
  \begin{align*}
    (p' \in Y \Rightarrow S_X(p') \in Y) \quad &\Leftrightarrow \quad (\{p'\} \subseteq Y \Rightarrow p \in Y)\\
    &\Leftrightarrow \quad (X_{S p} \subseteq Y \Rightarrow p \in Y).
  \end{align*}
  From this, by using again the fact that $S_X$ is injective onto $S_X[X]$, it follows that
  \begin{align*}
    S_X[Y] \subseteq Y \quad &\Leftrightarrow \quad (\forall p' \in X, \ p' \in Y \Rightarrow S_X(p') \in Y)\\
    &\Leftrightarrow \quad (\forall p \in S_X[X], \ X_{S p} \subseteq Y \Rightarrow p \in Y).
  \end{align*}
  Thus,
  \begin{align*}
    \mathbf{RI} \quad &\Leftrightarrow \quad (\forall Y \subseteq X, \ ((0_X \in Y) \wedge (S_X[Y] \subseteq Y)) \, \Rightarrow \, Y = X)\\
    &\Leftrightarrow \quad \left(\forall Y \subseteq X, \ \begin{cases} (X_{S 0_X} \subseteq Y \Rightarrow 0_X \in Y) \\ \text{and} \\(\forall p \in S_X[X],\ X_{S p} \subseteq Y \Rightarrow p \in Y) \end{cases} \, \Rightarrow \, Y = X\right)\\
    &\xRightarrow[\text{(\textasteriskcentered)}]{} \quad \left(\forall Y \subseteq X, \ (\forall p \in X,\ X_{S p} \subseteq Y \Rightarrow p \in Y) \, \Rightarrow \, Y = X\right).
  \end{align*}
  \item \label{Itm:Rem:PrinciplesForms:RIsuf} If $X = \{0_X\}\cup S_X[X]$, then property \eqref{E:RIsufcond} is also a sufficient condition for $\mathbf{RI}$. To prove this, it suffices to notice that if $X = \{0_X\}\cup S_X[X]$, then the converse of the implication $(\ast)$ in part \eqref{Itm:Rem:PrinciplesForms:RInec}, above, is true.
  \item \label{Itm:Rem:PrinciplesForms:CIIPSform} $\mathbf{CI}$ has the same form as \eqref{E:RIsufcond}, with $<_X$ instead of $S_X$. So, if $X = \{0_X\}\cup S_X[X]$, then $\mathbf{RI}$ and $\mathbf{CI}$ are both of the form
      \begin{equation}\label{E:CIRIformIf0S}
      \forall Y \subseteq X, \ (\forall p \in X,\ X_{\square p} \subseteq Y \Rightarrow p \in Y) \, \Rightarrow \, Y = X,
\end{equation}
  where the symbol $\square$ marks the place for $S$ and $<$, respectively.
  \item \label{Itm:Rem:PrinciplesForms:FDWF} Given a binary relation $\square_X$ on $X$, for all $Y \subseteq X$, the statement
      \[(\forall y \in Y, \, \exists y' \in Y, \ y'\ \square_X\ y) \, \Rightarrow \, Y = \varnothing\]
      is the contrapositive of
      \[Y \neq \varnothing \, \Rightarrow \, (\exists y \in Y, \, \forall y' \in Y,\ y' \ \cancel{\square}_X\ y),\]
  where $y' \ \cancel{\square}_X\ y$ stands for $\neg (y' \ \square_X\ y )$, the negation of $y' \ \square_X\ y$. So, the property
  \begin{equation}\label{E:FDSFDOform}
  \forall Y \subseteq X, \ (\forall y \in Y, \, \exists y' \in Y, \ y'\ \square_X\ y) \, \Rightarrow \, Y = \varnothing
  \end{equation}
  is equivalent to
  \begin{equation}\label{E:WFSWFOform}
  \forall Y \subseteq X, \ Y \neq \varnothing \, \Rightarrow \, (\exists m \in Y, \, \forall y \in Y,\ y \ \cancel{\square}_X\ m).
  \end{equation}
  Both $\mathbf{FDS}$ and $\mathbf{FDO}$ are of the form \eqref{E:FDSFDOform}, with $\square \ \equiv\ S$ and $\square \ \equiv\ <$, respectively. (We use $\equiv$ for syntactical equality.) Also, $\mathbf{WFS}$ and $\mathbf{WFO}$ are of the form \eqref{E:WFSWFOform}, with $\square \ \equiv\ S$ and $\square \ \equiv\ <$, respectively. Therefore, $\mathbf{FDS}$ is equivalent to $\mathbf{WFS}$ and $\mathbf{FDO}$ is equivalent to $\mathbf{WFO}$.
  \item \label{Itm:Rem:PrinciplesForms:WOWFO} If we assume that, for all $x,y \in X$, $x \leq_X y$ if and only if $y \not<_X x$ (i.e., if we assume that $<_X$ is a \emph{linear strict order}), then $\mathbf{WO}$ becomes equivalent to $\mathbf{WFO}$.
\end{enumerate}
\end{remark}

\begin{definition}\label{D:GenInductionPrincipleWellfoundedness}
Let $\square_X$ be a binary relation on $X$.
\begin{enumerate}[(a)]
  \item The \emph{induction principle of $\square_X$} is the property
  \[ \forall Y \subseteq X, \ (\forall p \in X,\ X_{\square p} \subseteq Y  \Rightarrow p \in Y) \, \Rightarrow \, Y = X. \]
  \item We say that $\square_X$ is \emph{well-founded} if
  \[ \forall Y \subseteq X, \ Y \neq \varnothing \, \Rightarrow \, (\exists m \in Y, \, \forall y \in Y,\ y \ \cancel{\square}_X\ m).  \]
\end{enumerate}
\end{definition}

The following result is well-known \cite[Chapter 4]{RS14}.

\begin{theorem}[Generalized Induction Theorem]\label{T:GenInductionTheorem}
Let $\square_X$ be a binary relation on $X$. Then, the induction principle of $\square_X$ is true if and only if $\square_X$ is well-founded.
\end{theorem}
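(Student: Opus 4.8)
The plan is to establish the two implications separately, exploiting the fact that they are dual to each other: in both directions the key move is to pass to the complement $Z = X \setminus Y$ of the set $Y$ under consideration and to relate the premise of the induction principle to the existence of a $\square_X$-minimal element of $Z$, that is, an element of $Z$ having no $\square_X$-predecessor inside $Z$. Here I understand the $\square_X$-predecessors of $p$ to be exactly the elements of the set $X_{\square p}$ defined in~\eqref{E:Xsquarep}.

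First I would prove that well-foundedness implies the induction principle of $\square_X$. Assume $\square_X$ is well-founded and let $Y \subseteq X$ satisfy the inductive premise, namely $\forall p \in X,\ (X_{\square p} \subseteq Y \Rightarrow p \in Y)$. Suppose, for contradiction, that $Y \neq X$, so that $Z = X \setminus Y$ is nonempty. Well-foundedness then supplies an $m \in Z$ with $y \ \cancel{\square}_X\ m$ for every $y \in Z$. I would next argue that $X_{\square m} \subseteq Y$: any $x$ with $x \ \square_X\ m$ cannot lie in $Z$, for otherwise it would contradict the minimality of $m$, and hence $x \in Y$. Feeding $p = m$ into the inductive premise yields $m \in Y$, contradicting $m \in Z$. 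Therefore $Y = X$, which is the induction principle.

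For the converse I would prove the contrapositive: if $\square_X$ is not well-founded, then the induction principle of $\square_X$ fails. Negating well-foundedness gives a nonempty $Z \subseteq X$ such that every $y \in Z$ admits some $y' \in Z$ with $y' \ \square_X\ y$. Set $Y = X \setminus Z$; then $Y \neq X$ because $Z \neq \varnothing$. It remains to verify that $Y$ satisfies the inductive premise. Let $p \in X$ with $X_{\square p} \subseteq Y$. If $p$ belonged to $Z$, it would have a $\square_X$-predecessor $y' \in Z = X \setminus Y$, contradicting $X_{\square p} \subseteq Y$; hence $p \in Y$. Thus $Y$ satisfies $\forall p \in X,\ (X_{\square p} \subseteq Y \Rightarrow p \in Y)$ and yet $Y \neq X$, so the induction principle of $\square_X$ is false.

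I do not expect any genuine obstacle here; the argument is routine once this dictionary is set up. The only points demanding care are purely logical: correctly negating the well-foundedness statement, so that ``minimal'' becomes ``has no $\square_X$-predecessor in the set'', and matching this with the premise of the induction principle, which quantifies over the $\square_X$-predecessors $X_{\square p}$ of each $p$. Since Definition~\ref{D:GenInductionPrincipleWellfoundedness} states both properties for an \emph{arbitrary} binary relation $\square_X$, with no further hypotheses on $X$, neither irreflexivity, transitivity, nor any interaction with $0_X$ or $S_X$ is used, so the proof applies verbatim to each of the instances $\square \ \equiv\ <$ and $\square \ \equiv\ S$ invoked elsewhere in the paper.
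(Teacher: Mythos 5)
Your proof is correct and rests on exactly the same idea as the paper's: the complementation dictionary under which the inductive premise for $Y$ corresponds to the statement that every element of $X \setminus Y$ has a $\square_X$-predecessor in $X \setminus Y$, i.e., that $X \setminus Y$ has no $\square_X$-minimal element. The only difference is presentational --- the paper packages this as a single chain of logical equivalences together with the contrapositive observation of Remark~\ref{Rem:PrinciplesForms}(\ref{Itm:Rem:PrinciplesForms:FDWF}) and the fact that $Y \mapsto X - Y$ is a bijection of the power set, whereas you argue the two implications separately by contradiction --- and your closing remark that no hypothesis on $\square_X$ is used matches the paper's level of generality.
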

\begin{proof}
For all $Y \subseteq X$, by contraposition and other elementary logic rules, we have:
\begin{align*}
&(\forall x \in X - Y,\ \exists x' \in X - Y,\ x' \,\square_X\, x)\\
\Leftrightarrow \quad &(\forall x \in X,\ x \notin Y \, \Rightarrow \, (\exists x' \in X - Y, \ x' \,\square_X\, x))\\
\Leftrightarrow \quad &(\forall x \in X,\ (\forall x' \in X - Y,\ x' \, \cancel{\square}_X\, x) \,\Rightarrow\, x \in Y)\\
\Leftrightarrow \quad &(\forall x \in X,\ (\forall x' \in X,\ x' \notin Y \,\Rightarrow\ x' \, \cancel{\square}_X\, x) \,\Rightarrow\, x \in Y)\\
\Leftrightarrow \quad &(\forall x \in X,\ (\forall x' \in X,\ x' \,\square_X\, x \,\Rightarrow\, x' \in Y) \,\Rightarrow\, x \in Y)\\
\Leftrightarrow \quad &(\forall x \in X,\ X_{\square x} \subseteq Y \,\Rightarrow\, x \in Y).
\end{align*}
Also, it is clear that $Y = X \ \Leftrightarrow \ X - Y = \varnothing$. So,
\begin{multline*}
  ((\forall x \in X - Y,\ \exists x' \in X - Y,\ x' \,\square_X\, x)\,\Rightarrow\, X - Y = \varnothing) \\
  \Leftrightarrow \quad ((\forall x \in X,\ X_{\square x} \subseteq Y \,\Rightarrow\, x \in Y) \,\Rightarrow\, Y = X).
\end{multline*}
Moreover, since the correspondence $Y \mapsto X - Y$ is a bijection from the power set of $X$ to itself, we may substitute $Y$ for $X - Y$ in the first of these two equivalent sentences, above. Of course, we may change bound variables too. So, it follows that
\begin{multline*}
  (\forall Y \subseteq X, \ (\forall y \in Y,\ \exists y' \in Y,\ y' \,\square_X\, y)\,\Rightarrow\, Y = \varnothing) \\
  \Leftrightarrow \quad (\forall Y \subseteq X, \ (\forall p \in X,\ X_{\square p} \subseteq Y \,\Rightarrow\, p \in Y) \,\Rightarrow\, Y = X).
\end{multline*}
Then, by the equivalence between the statement forms \eqref{E:FDSFDOform} and \eqref{E:WFSWFOform}, commented in Remark~\ref{Rem:PrinciplesForms}(\ref{Itm:Rem:PrinciplesForms:FDWF}), it results:
\begin{multline*}
  (\forall Y \subseteq X, \ Y \neq \varnothing \, \Rightarrow \, (\exists m \in Y, \, \forall y \in Y,\ y \ \cancel{\square}_X\ m)) \\
  \Leftrightarrow \quad (\forall Y \subseteq X, \ (\forall p \in X,\ X_{\square p} \subseteq Y  \Rightarrow p \in Y) \, \Rightarrow \, Y = X). \qedhere
\end{multline*}
\end{proof}

We say that $x \in X$ is \emph{minimal} with respect to a binary relation $\square_X$ on $X$ if there is no $x' \in X$ such that $x' \,\square_X\, x$. So, $\square_X$ is well-founded if and only if every nonempty subset of $X$ has a minimal element with respect to $\square_X$. Thus, in order that the induction principles of two different binary relations to be equivalent, it is sufficient that every minimal element with respect to any of these relations is also a minimal element with respect to the other. Now, we analyse what one can say about minimal elements with respect to $<_X$ and $S_X$.

\begin{lemma}\label{TL:Minimality}
Let $x \in X$ be given.
\begin{enumerate}[(a)]
  \item If $x$ is minimal with respect to $<_X$, then $x$ is minimal with respect to $S_X$.
  \item If $X = \{0_X\}\cup S_X[X]$ and $x$ is minimal with respect to $S_X$, then $x$ is minimal with respect to $<_X$.
\end{enumerate}
Hence, if $X = \{0_X\}\cup S_X[X]$, then $x$ is minimal with respect to $<_X$ if and only if $x$ is minimal with respect to $S_X$.
\end{lemma}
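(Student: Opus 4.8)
The plan is to reduce both parts to the two clauses of Definition~\ref{D:PreIndPeanoModel}(\ref{Itm:<0Srelation}), the axioms that tie the order $<_X$ to the zero and the successor. Using the notation~\eqref{E:Xsquarep}, recall that $x$ is minimal with respect to $<_X$ exactly when $X_{<x} = \varnothing$, while $x$ is minimal with respect to $S_X$ exactly when $X_{Sx} = \varnothing$, i.e., when $x \notin S_X[X]$ (in words, when $x$ is not a successor).

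For part (a) I would argue by contraposition. Suppose $x$ is \emph{not} minimal with respect to $S_X$, so $x = S_X(p)$ for some $p \in X$. Then Definition~\ref{D:PreIndPeanoModel}(\ref{Itm:<0Srelation})(ii) gives $X_{<x} = X_{<S_X(p)} = \{p\} \cup X_{<p}$, which contains $p$ and is therefore nonempty; hence $x$ is not minimal with respect to $<_X$. Note that this direction needs no extra hypothesis.

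For part (b) I would use the hypothesis $X = \{0_X\} \cup S_X[X]$ directly. If $x$ is minimal with respect to $S_X$, then $x \notin S_X[X]$, and together with $X = \{0_X\} \cup S_X[X]$ this forces $x = 0_X$. Applying Definition~\ref{D:PreIndPeanoModel}(\ref{Itm:<0Srelation})(i) then yields $X_{<x} = X_{<0_X} = \varnothing$, so $x$ is minimal with respect to $<_X$. The concluding biconditional follows at once by combining (a) and (b) under the assumption $X = \{0_X\} \cup S_X[X]$.

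Since the whole argument is a direct unwinding of the definitions, I do not anticipate a real obstacle; the only point needing care is the conceptual identification of ``minimal with respect to $S_X$'' with ``not a successor,'' together with an appreciation of why the extra hypothesis is indispensable in (b). Without $X = \{0_X\} \cup S_X[X]$ there can exist non-successor elements that nonetheless possess $<_X$-predecessors --- precisely the behaviour of the element $b_0$ in the counterexample of Section~1, which is neither $0_X$ nor a successor yet has all of $A$ below it --- so the converse of (a) genuinely fails in general.
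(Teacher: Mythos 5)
Your proof is correct and follows essentially the same route as the paper: part (a) is the identical contrapositive argument from Definition~\ref{D:PreIndPeanoModel}(\ref{Itm:<0Srelation})(ii), and part (b) uses the same two facts (the hypothesis $X = \{0_X\}\cup S_X[X]$ together with $X_{<0_X}=\varnothing$), merely stated directly where the paper argues by contraposition. The added observation about $b_0$ in the Section~1 counterexample is a nice, accurate illustration of why the hypothesis in (b) cannot be dropped.
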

\begin{proof}
\noindent
\begin{enumerate}[(a)]
  \item By Definition~\ref{D:PreIndPeanoModel}(\ref{Itm:<0Srelation})(ii), for all $x' \in X$, if $S_X(x') = x$, then $x' < x$. So, if $x$ is not minimal with respect to $S_X$, then it is not minimal with respect to $<_X$. From this, the intended claim follows by contraposition.
  \item Suppose $X = \{0_X\}\cup S_X[X]$. By Definition~\ref{D:PreIndPeanoModel}(\ref{Itm:<0Srelation})(i), $0_X$ is minimal with respect to $<_X$. Thus, if $x$ is not minimal with respect to $<_X$, then $x \in S_X[X]$, i.e., $x = S_X(x')$ for some $x' \in X$, so that $x$ is not minimal with respect to $S_X$. Now, the proof is finished by contraposition. \qedhere
\end{enumerate}
\end{proof}

Later, in Theorem~\ref{T:(Almost)ValidImplications}, we use Theorem~\ref{T:GenInductionTheorem} and Lemma~\ref{TL:Minimality} to establish valid and ``almost valid'' implications between the induction principle of $S_X$ and the induction principle of $<_X$, which is $\mathbf{CI}$. Here, by ``almost valid'', we mean ``valid if $X = \{0_X\} \cup S_X[X]$''. All the properties listed in Definition~\ref{D:IndPrinciples} are studied there in Theorem~\ref{T:(Almost)ValidImplications}, but we have decided to state and prove some partial results separately, and two important pieces are the following theorems.

\begin{theorem}[$\mathbf{SI} \Rightarrow \mathbf{CI}$ and the converse is almost true]\label{T:SICI}\noindent
\begin{enumerate}[(a)]
  \item $\mathbf{SI}$ implies $\mathbf{CI}$.
  \item If $X = \{0_X\}\cup S_X[X]$, then $\mathbf{CI}$ implies $\mathbf{SI}$.
\end{enumerate}
Hence, if $X = \{0_X\}\cup S_X[X]$, then $\mathbf{SI}$ and $\mathbf{CI}$ are equivalent.
\end{theorem}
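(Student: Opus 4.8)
The plan is to prove both implications by directly unwinding the definitions of $\mathbf{SI}$ and $\mathbf{CI}$, relying throughout on the structural identity
\[ X_{<S_X(p)} = \{p\}\cup X_{<p} = X_{\leq p}, \]
which combines Definition~\ref{D:PreIndPeanoModel}(\ref{Itm:<0Srelation})(\ref{Itm:X<SXp}) with~\eqref{E:|X<=p|=|X<p|U|p|}. This single observation is what links the successor-indexed hypothesis of $\mathbf{SI}$ to the order-indexed hypothesis of $\mathbf{CI}$, so both parts should reduce to routine verifications once it is in hand.

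For part (a), I would assume $\mathbf{SI}$ and take an arbitrary $Y \subseteq X$ satisfying the antecedent of $\mathbf{CI}$, namely $\forall p \in X,\ X_{<p}\subseteq Y \Rightarrow p \in Y$; the goal $Y = X$ is then obtained by checking that $Y$ fulfills the two hypotheses of $\mathbf{SI}$. First, instantiating the $\mathbf{CI}$-antecedent at $p = 0_X$ and using $X_{<0_X} = \varnothing$ (Definition~\ref{D:PreIndPeanoModel}(\ref{Itm:<0Srelation})(\ref{Itm:X<0Xempty})) gives $0_X \in Y$ by vacuity. Second, for any $p \in X$ with $X_{\leq p}\subseteq Y$, the identity above yields $X_{<S_X(p)} = X_{\leq p}\subseteq Y$, so instantiating the $\mathbf{CI}$-antecedent at $S_X(p)$ delivers $S_X(p)\in Y$. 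Both hypotheses of $\mathbf{SI}$ then hold, hence $Y = X$. Note that no assumption on the shape of $X$ is needed here.

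For part (b), I would assume $\mathbf{CI}$ together with $X = \{0_X\}\cup S_X[X]$, take $Y\subseteq X$ satisfying the two hypotheses of $\mathbf{SI}$, and verify the antecedent of $\mathbf{CI}$ to conclude $Y = X$. So let $q\in X$ with $X_{<q}\subseteq Y$; I must show $q\in Y$. Here I split into cases according to $X = \{0_X\}\cup S_X[X]$: if $q = 0_X$, then $q \in Y$ directly from the first hypothesis of $\mathbf{SI}$; if $q = S_X(p)$ for some $p\in X$, then $X_{<q} = X_{\leq p}\subseteq Y$ by the identity, and the second hypothesis of $\mathbf{SI}$ gives $q = S_X(p)\in Y$. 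In either case $q\in Y$, so $\mathbf{CI}$ applies and $Y = X$.

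The main obstacle is not computational but structural, and it is localized entirely in part (b): the case split on $q$ is exhaustive \emph{only} because $X = \{0_X\}\cup S_X[X]$ guarantees that every element is either the zero or a successor. Without this hypothesis there may be elements $q$ that are neither, and for such $q$ the hypotheses of $\mathbf{SI}$ provide nothing that would force $q\in Y$ --- which is precisely the gap exhibited by the $\omega+\omega$ counterexample of the introduction. The final ``Hence'' clause is then immediate by combining (a) and (b).
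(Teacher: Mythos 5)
Your proof is correct and follows essentially the same route as the paper's: both arguments turn on the identities $X_{<0_X}=\varnothing$ and $X_{<S_X(p)}=X_{\leq p}$, with part (a) needing no extra hypothesis and part (b) using $X=\{0_X\}\cup S_X[X]$ exactly to make the case split (zero vs.\ successor) exhaustive. The only difference is presentational --- you instantiate the antecedents element by element, whereas the paper rewrites $\mathbf{SI}$ as a whole into an order-indexed form and compares antecedents --- but the mathematical content is identical.
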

\begin{proof}
\noindent
\begin{enumerate}[(a)]
  \item By Definition~\ref{D:PreIndPeanoModel}(\ref{Itm:<0Srelation})(i), $X_{< 0_X} = \varnothing$. So, for every set $Y$, we have $0_X \in Y$ if and only if $X_{< 0_X} \subseteq Y \Rightarrow 0_X \in Y$. On the other hand, by Definition~\ref{D:PreIndPeanoModel}(\ref{Itm:<0Srelation})(ii), $X_{< S_X(p)} = X_{\leq p}$. So,
  \begin{multline*}
  \mathbf{SI} \\
  \begin{aligned}
  &\Leftrightarrow \  \left(\forall Y \subseteq X, \ \begin{cases} (0_X \in Y) \\ \text{and} \\(\forall p \in X,\ X_{\leq p} \subseteq Y \Rightarrow S_X(p) \in Y) \end{cases} \, \Rightarrow \, Y = X\right) \\
  &\Leftrightarrow \  \left(\forall Y \subseteq X, \ \begin{cases} (X_{< 0_X} \subseteq Y \Rightarrow 0_X \in Y) \\ \text{and} \\(\forall p \in X,\ X_{< S_X(p)} \subseteq Y \Rightarrow S_X(p) \in Y) \end{cases} \, \Rightarrow \, Y = X\right)\\
  &\Leftrightarrow \  \left(\forall Y \subseteq X, \ \begin{cases} (X_{< 0_X} \subseteq Y \Rightarrow 0_X \in Y) \\ \text{and} \\(\forall q \in S_X[X],\ X_{< q} \subseteq Y \Rightarrow q \in Y) \end{cases} \, \Rightarrow \, Y = X\right)\\
  &\xRightarrow[(\star)]{} \  (\forall Y \subseteq X, \ (\forall p \in X,\ X_{< p} \subseteq Y \Rightarrow p \in Y) \, \Rightarrow \, Y = X)\\
  &\Leftrightarrow \  \mathbf{CI}.
  \end{aligned}
  \end{multline*}
  \item If $X = \{0_X\}\cup S_X[X]$, then the converse of the implication ($\star$), above, is true. \qedhere
\end{enumerate}
\end{proof}

\begin{theorem}[$\mathbf{RI} \Leftrightarrow \mathbf{SI}$]\label{T:RIeqSI}
Regular induction and strong induction are equivalent.
\end{theorem}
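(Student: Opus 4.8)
The plan is to prove both implications directly from the definitions, and the first thing I would stress is that, unlike several other equivalences in this paper, this one should hold \emph{unconditionally}, with no need to assume $X = \{0_X\}\cup S_X[X]$. The reason is conceptual: both $\mathbf{RI}$ and $\mathbf{SI}$ generate $X$ from $0_X$ by repeated application of $S_X$, so each is really a statement about the successor structure anchored at the zero, rather than about the order as such. This is in contrast with $\mathbf{CI}$, whose hypothesis quantifies over the $<_X$-predecessors of a point and therefore becomes blind to elements that are neither $0_X$ nor successors.

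For the easy direction, $\mathbf{SI}\Rightarrow\mathbf{RI}$, I would take an arbitrary $Y\subseteq X$ satisfying the hypotheses of $\mathbf{RI}$, namely $0_X\in Y$ and $S_X[Y]\subseteq Y$, and simply check that $Y$ already satisfies the hypotheses of $\mathbf{SI}$. The clause $0_X\in Y$ is shared. For the successor clause of $\mathbf{SI}$, suppose $X_{\leq p}\subseteq Y$ for some $p\in X$; since $p\in X_{\leq p}$ by \eqref{E:|X<=p|=|X<p|U|p|}, we get $p\in Y$, whence $S_X(p)\in Y$ by closure under $S_X$. Thus $\mathbf{SI}$ applies to $Y$ and yields $Y=X$.

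For the converse, $\mathbf{RI}\Rightarrow\mathbf{SI}$, the idea is to repackage the strong-induction hypothesis into a set to which regular induction can be applied. Assuming $\mathbf{RI}$, I would let $Y\subseteq X$ satisfy the hypotheses of $\mathbf{SI}$ and introduce the auxiliary set
\[ Z = \{x\in X \mid X_{\leq x}\subseteq Y\}, \]
which encodes ``every element up to $x$ lies in $Y$''. Then I would verify that $Z$ meets the two hypotheses of $\mathbf{RI}$. First, $0_X\in Z$: by Definition~\ref{D:PreIndPeanoModel}(\ref{Itm:<0Srelation})(i) and \eqref{E:|X<=p|=|X<p|U|p|} we have $X_{\leq 0_X} = \{0_X\}\subseteq Y$. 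Second, $S_X[Z]\subseteq Z$: if $p\in Z$, i.e.\ $X_{\leq p}\subseteq Y$, then the successor clause of $\mathbf{SI}$ gives $S_X(p)\in Y$, and since $X_{\leq S_X(p)} = X_{\leq p}\cup\{S_X(p)\}$ by \eqref{E:|X<=S(p)|=|X<=p|U|S(p)|}, we conclude $X_{\leq S_X(p)}\subseteq Y$, that is, $S_X(p)\in Z$. Applying $\mathbf{RI}$ to $Z$ gives $Z=X$, and since $x\in X_{\leq x}$ for every $x\in X$, this forces $Y=X$.

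The only subtle point, and what I expect to be the main (though still minor) obstacle, is the bookkeeping in the converse: one must pass from the pointwise strong-induction hypothesis about $X_{\leq p}$ to the closure property $S_X[Z]\subseteq Z$, which relies precisely on the decomposition $X_{\leq S_X(p)} = X_{\leq p}\cup\{S_X(p)\}$ recorded in \eqref{E:|X<=S(p)|=|X<=p|U|S(p)|}. Everything else is routine, and it is worth noting explicitly that no linearity or completeness of $<_X$, nor the hypothesis $X=\{0_X\}\cup S_X[X]$, is invoked at any stage, so the equivalence $\mathbf{RI}\Leftrightarrow\mathbf{SI}$ holds in every pre-inductive Peano model.
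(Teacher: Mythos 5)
Your proof is correct and follows essentially the same route as the paper: the direction $\mathbf{SI}\Rightarrow\mathbf{RI}$ observes that the $\mathbf{RI}$ hypotheses on $Y$ already imply the $\mathbf{SI}$ hypotheses (via $p\in X_{\leq p}$), and the direction $\mathbf{RI}\Rightarrow\mathbf{SI}$ applies regular induction to the auxiliary set $Z=\{x\in X\mid X_{\leq x}\subseteq Y\}$, which is exactly the paper's $\overline{Y}$, using \eqref{E:|X<=S(p)|=|X<=p|U|S(p)|} for closure under $S_X$. Your remark that no assumption $X=\{0_X\}\cup S_X[X]$ is needed matches the paper's unconditional statement of this equivalence.
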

\begin{proof}
We prove the implications $\mathbf{RI} \, \Rightarrow \, \mathbf{SI}$ and $\mathbf{SI} \, \Rightarrow \, \mathbf{RI}$ separately.
\begin{itemize}
\item $\mathbf{RI} \, \Rightarrow \, \mathbf{SI}$: Given a $Y \subseteq X$, define $\overline{Y} = \{p \in X \mid X_{\leq p} \subseteq Y\}$ and note that
\begin{align*}
  \overline{Y} = X \quad &\Leftrightarrow \quad X \subseteq \overline{Y} \quad \Leftrightarrow \quad (\forall p \in X, \ p \in \overline{Y})\\
  &\Leftrightarrow \quad (\forall p \in X, \ X_{\leq p} \subseteq Y)\\
  &\Leftrightarrow \quad (\forall p \in X, \, \forall x \in X, \ x \leq_X p \, \Rightarrow \, x \in Y)\\
  &\xLeftrightarrow[\circledast]{} \quad (\forall p \in X, \ p \in Y) \quad \Leftrightarrow \quad  X \subseteq Y \quad \Leftrightarrow \quad Y = X.
\end{align*}
(For equivalence $\circledast$, prove each implication separately, going through ($\forall p \in X, \ p \leq_X p \, \Rightarrow \, p \in Y$) in the `$\Rightarrow$' part; the converse is more evident, since $X \subseteq Y$ implies $\forall p \in X, \forall x \in X, \, x \in Y$.) Moreover, $X_{\leq 0_X} = \{0_X\}$, so that
\[ 0_X \in Y \quad \Leftrightarrow \quad X_{\leq 0_X} \subseteq Y \quad \Leftrightarrow \quad 0_X \in \overline{Y}. \]
Furthermore, for all $Y \subseteq X$ and $p \in X$,
\begin{multline*}
  (X_{\leq p} \subseteq Y \Rightarrow S(p) \in Y) \\
  \begin{aligned}
  &\Leftrightarrow \quad (X_{\leq p} \subseteq Y \Rightarrow ((X_{\leq p} \subseteq Y) \wedge (S(p) \in Y)))\\
  &\Leftrightarrow \quad (X_{\leq p} \subseteq Y \Rightarrow X_{\leq p} \cup \{S(p)\} \subseteq Y)\\
  &\Leftrightarrow \quad (X_{\leq p} \subseteq Y \Rightarrow X_{\leq S(p)} \subseteq Y) &&\text{(By~\eqref{E:|X<=S(p)|=|X<=p|U|S(p)|})}\\
  &\Leftrightarrow \quad (p \in \overline{Y} \Rightarrow S(p) \in \overline{Y}).
\end{aligned}
\end{multline*}
Now, suppose $\mathbf{RI}$. If $0_X \in Y$ and $\forall p \in X,\ X_{\leq p} \subseteq Y \Rightarrow S(p) \in Y$, then, since $0_X \in Y \,\Rightarrow \, 0_X \in \overline{Y}$ and
\[ \forall p \in X, \quad (X_{\leq p} \subseteq Y \Rightarrow S(p) \in Y) \,\Rightarrow\, (p \in \overline{Y} \Rightarrow S(p) \in \overline{Y}),\]
it follows that $0_X \in \overline{Y}$ and $\forall p \in X, \ p \in \overline{Y} \Rightarrow S(p) \in \overline{Y}$. So, by $\mathbf{RI}$, we have $\overline{Y} = X$, which is equivalent to $Y = X$. Thus, we have just proved that if $0_X \in Y$ and $\forall p \in X,\ X_{\leq p} \subseteq Y \Rightarrow S(p) \in Y$, then $Y = X$. That is, we have proved $\mathbf{SI}$, from $\mathbf{RI}$.
\item $\mathbf{RI} \, \Leftarrow \, \mathbf{SI}$: Observe that, for all $Y \subseteq X$ and all $p \in X$, it is immediate by \eqref{E:|X<=p|=|X<p|U|p|} that $X_{\leq p} \subseteq Y \, \Rightarrow \, p \in Y$. So,
\begin{align*}
  (p \in Y \,\Rightarrow\, S(p) \in Y) \quad
  &\Leftrightarrow \quad \begin{cases}
                (p \in Y \,\Rightarrow\, S(p) \in Y)\\
                \text{and}\\
                (X_{\leq p} \subseteq Y \,\Rightarrow\, p \in Y)
              \end{cases}\\
  &\Rightarrow \quad (X_{\leq p} \subseteq Y \,\Rightarrow\, S(p) \in Y).
\end{align*}
Now, suppose $\mathbf{SI}$. If $0_X \in X$ and $\forall p \in X,\ p \in Y \,\Rightarrow\, S(p) \in Y$, then, since
\[\forall p \in X, \quad (p \in Y \Rightarrow S(p) \in Y)\,\Rightarrow\, (X_{\leq p} \subseteq Y \Rightarrow S(p) \in Y),\]
we have $0_X \in X$ and $\forall p \in X,\ X_{\leq p} \subseteq Y \Rightarrow S(p) \in Y$. Thus, by $\mathbf{SI}$, it results that $Y = X$. So, we have just proved that if $0_X \in X$ and $\forall p \in X,\ p \in Y \,\Rightarrow\, S(p) \in Y$, then $Y = X$. That is, we have proved $\mathbf{RI}$, from $\mathbf{SI}$. \qedhere
\end{itemize}
\end{proof}

A more informal proof of Theorem~\ref{T:RIeqSI} is available in~\cite{OMC}.

Before we state and prove the main theorem of this section, we need one more auxiliary result.

\begin{lemma}\label{TL:RIfacts} If $\mathbf{RI}$ holds, then
\begin{enumerate}[(a)]
  \item \label{Itm:RI->0S} $X = \{0_X\} \cup S_X[X]$.
  \item \label{Itm:0<=x} For all $x \in X$, $0_X \leq_X x$.
  \item \label{Itm:Sincreasing} For all $x,y \in X$, if $x <_X y$, then $S_X(x) <_X S_X(y)$.
  \item \label{Itm:CI0S=>LO} For all $x,y \in X$, $x <_X y$ or $x = y$ or $y <_X x$.
\end{enumerate}
\end{lemma}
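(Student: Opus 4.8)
The only tool available is $\mathbf{RI}$, so the plan for each part is the same: exhibit a set $Y \subseteq X$ that encodes the desired property, check that $0_X \in Y$ and that $S_X[Y] \subseteq Y$, and then invoke $\mathbf{RI}$ to conclude $Y = X$. Two consequences of the subordination axiom, Definition~\ref{D:PreIndPeanoModel}(\ref{Itm:<0Srelation})(\ref{Itm:X<SXp}), will be used repeatedly, so I would record them at the outset: taking $z = p$ in $X_{<S_X(p)} = \{p\} \cup X_{<p}$ shows $p <_X S_X(p)$ for every $p$; and the same identity says that $z <_X S_X(p)$ holds exactly when $z \leq_X p$, i.e.\ when $z <_X p$ or $z = p$.

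Parts (a)–(c) then fall out as single applications of $\mathbf{RI}$. For (a), I take $Y = \{0_X\} \cup S_X[X]$; it contains $0_X$ and, since $S_X[Y] \subseteq S_X[X] \subseteq Y$, it is closed under $S_X$, so $Y = X$. For (b), I set $Y = \{x \in X \mid 0_X \leq_X x\}$; clearly $0_X \in Y$, and if $0_X \leq_X x$ then, combining with $x <_X S_X(x)$ and transitivity (or with equality in the base case), $0_X <_X S_X(x)$, so $S_X(x) \in Y$. For (c), I induct on $y$ with $Y = \{y \in X \mid \forall x\in X,\ x <_X y \Rightarrow S_X(x) <_X S_X(y)\}$: the base $y = 0_X$ is vacuous because $X_{<0_X} = \varnothing$, and in the step I use $x <_X S_X(y) \Leftrightarrow x \leq_X y$ to split into the case $x = y$ (where $S_X(x) = S_X(y) <_X S_X(S_X(y))$) and the case $x <_X y$ (where the induction hypothesis gives $S_X(x) <_X S_X(y)$, and then transitivity with $S_X(y) <_X S_X(S_X(y))$ finishes).

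The main obstacle is (d), trichotomy. A naive single induction on $y$ for ``$\forall x$, $x <_X y \vee x = y \vee y <_X x$'' breaks down in the inductive step: in the subcase $y <_X x$ one must compare $x$ with $S_X(y)$, and ruling out $x <_X S_X(y)$ (which would force $x \leq_X y$, contradicting $y <_X x$ by irreflexivity and transitivity) still leaves the need to decide between $x = S_X(y)$ and $S_X(y) <_X x$ --- which is itself an instance of trichotomy. My plan is therefore a nested induction. Using part~(a), every element is either $0_X$ or of the form $S_X(y)$; part~(b) already settles trichotomy against $0_X$, so it suffices to prove, for each fixed $y$, that $R_y = \{x \in X \mid x <_X S_X(y) \vee x = S_X(y) \vee S_X(y) <_X x\}$ equals $X$. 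I prove $R_y = X$ by a second application of $\mathbf{RI}$ (induction on $x$): the base $0_X \in R_y$ is part~(b) applied to $S_X(y)$, and in the step, assuming $x \in R_y$, I treat the three disjuncts --- $x <_X S_X(y)$ (hence $x \leq_X y$, giving $S_X(x) = S_X(y)$ or, via~(c), $S_X(x) <_X S_X(y)$), then $x = S_X(y)$ (giving $S_X(y) <_X S_X(x)$), and finally $S_X(y) <_X x$ (giving $S_X(y) <_X S_X(x)$ by transitivity) --- each of which places $S_X(x)$ in $R_y$. Combining the two cases ($0_X$ and successors) yields trichotomy for every pair. I expect this inner induction, together with the bookkeeping of the three-way case split, to be the most delicate part of the argument.
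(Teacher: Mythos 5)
Your proofs of parts (a)--(c) are correct and essentially identical to the paper's: the same three sets $Y$, the same vacuous base case in (c), and the same use of $X_{<S_X(p)} = \{p\} \cup X_{<p}$ in the inductive steps. For part (d) you correctly diagnose why the naive induction (fix $x$, induct on $y$, case-split on how $x$ compares to $y$) stalls in the subcase $y <_X x$, but your workaround differs from the paper's. The paper keeps a single application of $\mathbf{RI}$, applied to the set $Y$ of elements comparable to \emph{every} element of $X$: in the step from $y$ to $S_X(y)$ it decomposes the arbitrary $x$ via part (a) as $0_X$ or $S_X(x')$, and then compares $x'$ (not $x$) with $y$ using the universal comparability packed into ``$y \in Y$'', finishing with parts (b) and (c). You instead fix $y$, run $\mathbf{RI}$ on the set $R_y$ of elements comparable to $S_X(y)$ --- so the induction variable is the \emph{other} member of the pair and the step is driven by the three-way split on the inductive hypothesis $x \in R_y$ --- and then use part (a) on the second coordinate only at the very end to assemble full trichotomy. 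Both arguments are sound and of comparable length; the paper's buys a single, self-contained induction at the cost of the slightly subtle ``comparable to everything'' predicate, while yours trades that for a family of inductions indexed by $y$ whose individual steps are more mechanical. Your worry that the inner induction is the delicate part is unfounded: the three cases you list all close cleanly using $p <_X S_X(p)$, transitivity, and part (c).
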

\begin{proof}
\noindent
\begin{enumerate}[(a)]
  \item The set $Y = \{0_X \} \cup S_X[X]$ is a subset of $X$ that contains $0_X$ as element and is clearly closed under $S_X$. Hence, by $\mathbf{RI}$, $Y = X$.
  \item It is clear that $0_X \leq_X 0_X$. Also, for all $x \in X$, if $0_X \leq_X x$, then, by Definition~\ref{D:PreIndPeanoModel}(\ref{Itm:<0Srelation})(ii), $0_X \leq_X x <_X S_X(x) \ \therefore \ 0_X \leq_X S_X(x)$. So, by $\mathbf{RI}$, $\{x \in X \mid 0_X \leq_X x\} = X$.
  \item Let $Y = \{y \in X \mid \forall x \in X, \ x <_X y \Rightarrow S_X(x) <_X S_X(y)\}$.
  By Definition~\ref{D:PreIndPeanoModel}(\ref{Itm:<0Srelation})(i), it is clear that $0_X \in Y$. On the other hand, let $y \in Y$ be arbitrary. So, for all $x \in X$, if $x <_X y$ then $S_X(x) <_X S_X(y)$ (induction hypothesis). For all $x \in X$, we have
      \begin{multline*}
        x <_X S_X(y)\\
        \begin{aligned}
        &\Rightarrow \quad (x <_X y)\vee(x = y) &&\text{(by Definition~\ref{D:PreIndPeanoModel}(\ref{Itm:<0Srelation})(ii))}\\
        &\Rightarrow \quad (S_X(x) <_X S_X(y))\vee(S_X(x) = S_X(y)) &&\text{(by ind. hypothesis)}\\
        &\Rightarrow \quad S_X(x) <_X S_X(S_X(y)). &&\text{(by Definition~\ref{D:PreIndPeanoModel}(\ref{Itm:<0Srelation})(ii))}
        \end{aligned}
      \end{multline*}
  So, $S_X(y) \in Y$. Since $y$ is arbitrary, we have just proved that $S_X[Y] \subseteq Y$. Thus, $Y$ contains $0_X$ as element and is closed under $S_X$. Then, by $\mathbf{RI}$, $Y = X$.
  \item Let us say that $x$ and $y$ are comparable if $x <_X y$ or $x = y$ or $y <_X x$. Let $Y$ be the set of all elements of $X$ that are comparable to all elements of $X$, that is, $Y = \{y \in X \mid \forall x \in X,\ (x <_X y) \vee (x = y) \vee (y <_X x) \}$. Given an arbitrary $y \in Y$, we want to prove that $S_X(y) \in Y$. For each $x \in X$, by part \eqref{Itm:RI->0S} of this Lemma, there are two cases to consider:
      \begin{itemize}
        \item $x = 0_X$. Then, by part \eqref{Itm:0<=x}, $x \leq_X S_X(y)$.
        \item $x \in S_X[X]$. So, there is an $x' \in X$ such that $x = S_X(x')$. Since $y \in  Y$, we have that $x'$ and $y$ are comparable. Then, one of the three following subcases occurs:
            \begin{itemize}
              \item $x' <_X y$. By part \eqref{Itm:Sincreasing}, this implies $x = S_X(x') <_X S_X(y)$.
              \item $x' = y$. So, $x = S_X(x') = S_X(y)$.
              \item $y <_X x'$. By part \eqref{Itm:Sincreasing}, this implies $S_X(y) <_X S_X(x') = x$.
            \end{itemize}
      \end{itemize}
      Therefore, in any case, $x$ and $S_X(y)$ are comparable. Since $x$ and $y$ are arbitrary, we have just proved that $S_X[Y] \subseteq Y$. Moreover, by part \eqref{Itm:0<=x}, we have $0_X \in Y$. Then, by $\mathbf{RI}$, it results $Y = X$. \qedhere
\end{enumerate}
\end{proof}

\begin{theorem}\label{T:(Almost)ValidImplications}
Let $\mathbf{IPS}$ denote the induction principle of $S_X$, as formulated in Definition~\ref{D:GenInductionPrincipleWellfoundedness}, i.e.,
\begin{equation}\label{E:IPS}
\forall Y \subseteq X, \ (\forall p \in X,\ X_{S p} \subseteq Y \Rightarrow p \in Y) \, \Rightarrow \, Y = X.
\end{equation}
Some logical relations between this property and that listed in Definition~\ref{D:IndPrinciples} are indicated in Fig.~\ref{F:ArrowDiagram}. Solid arrows represent necessarily valid implications, while dashed arrows represent implications that are valid provided that $X = \{0_X\} \cup S_X[X]$.
\end{theorem}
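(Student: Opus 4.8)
The plan is to treat Theorem~\ref{T:(Almost)ValidImplications} as an assembly problem: once the nine properties are grouped into a few equivalence classes, almost every arrow in the diagram is obtained by chaining together results already proved. First I would record the unconditional equivalences. By the Generalized Induction Theorem (Theorem~\ref{T:GenInductionTheorem}), $\mathbf{IPS}$, being the induction principle of $S_X$, is equivalent to the well-foundedness of $S_X$, which is exactly $\mathbf{WFS}$; and by Remark~\ref{Rem:PrinciplesForms}(\ref{Itm:Rem:PrinciplesForms:CIIPSform}), $\mathbf{CI}$ is the induction principle of $<_X$, hence equivalent to $\mathbf{WFO}$. Remark~\ref{Rem:PrinciplesForms}(\ref{Itm:Rem:PrinciplesForms:FDWF}) then adds $\mathbf{WFS}\Leftrightarrow\mathbf{FDS}$ and $\mathbf{WFO}\Leftrightarrow\mathbf{FDO}$, so we obtain the two blocks $\mathbf{IPS}\Leftrightarrow\mathbf{WFS}\Leftrightarrow\mathbf{FDS}$ and $\mathbf{CI}\Leftrightarrow\mathbf{WFO}\Leftrightarrow\mathbf{FDO}$, while Theorem~\ref{T:RIeqSI} gives $\mathbf{RI}\Leftrightarrow\mathbf{SI}$. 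It then suffices to argue about the representatives $\mathbf{RI}$, $\mathbf{WO}$, $\mathbf{CI}$, and $\mathbf{IPS}$.

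Next I would install the solid (unconditional) implications. The step $\mathbf{SI}\Rightarrow\mathbf{CI}$ is Theorem~\ref{T:SICI}(a), whence $\mathbf{RI}\Rightarrow\mathbf{CI}$. The implication $\mathbf{CI}\Rightarrow\mathbf{IPS}$ follows from Lemma~\ref{TL:Minimality}(a): since every element minimal for $<_X$ is minimal for $S_X$, well-foundedness of $<_X$ yields well-foundedness of $S_X$, i.e.\ $\mathbf{WFO}\Rightarrow\mathbf{WFS}$, and translating back through Theorem~\ref{T:GenInductionTheorem} gives $\mathbf{CI}\Rightarrow\mathbf{IPS}$. For $\mathbf{WO}\Rightarrow\mathbf{WFO}$ (hence $\mathbf{WO}\Rightarrow\mathbf{CI}$) I would argue directly that a minimum is minimal: if $m$ is a $\leq_X$-least element of a nonempty $Y$ and some $y\in Y$ had $y<_X m$, then $m\leq_X y$ together with irreflexivity and transitivity (Definition~\ref{D:PreIndPeanoModel}(\ref{Itm:<Xlinord})) would force $m<_X m$, a contradiction. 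Finally, $\mathbf{RI}\Rightarrow\mathbf{WO}$: Lemma~\ref{TL:RIfacts}(\ref{Itm:CI0S=>LO}) makes $<_X$ a linear strict order, and $\mathbf{RI}\Rightarrow\mathbf{CI}\Leftrightarrow\mathbf{WFO}$; under linearity $\mathbf{WFO}\Rightarrow\mathbf{WO}$ by Remark~\ref{Rem:PrinciplesForms}(\ref{Itm:Rem:PrinciplesForms:WOWFO}). This orders the classes as $\mathbf{RI}\Leftrightarrow\mathbf{SI}\;\rightarrow\;\mathbf{WO}\;\rightarrow\;\mathbf{CI}\;\rightarrow\;\mathbf{IPS}$.

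Then I would install the dashed implications, all of which assume $X=\{0_X\}\cup S_X[X]$. Theorem~\ref{T:SICI}(b) gives $\mathbf{CI}\Rightarrow\mathbf{SI}$, and Lemma~\ref{TL:Minimality}(b) with Theorem~\ref{T:GenInductionTheorem} gives $\mathbf{WFS}\Rightarrow\mathbf{WFO}$, i.e.\ $\mathbf{IPS}\Rightarrow\mathbf{CI}$ (and Remark~\ref{Rem:PrinciplesForms}(\ref{Itm:Rem:PrinciplesForms:RIsuf}) independently yields $\mathbf{IPS}\Rightarrow\mathbf{RI}$). Composing these dashed arrows with the solid chain closes every cycle: under the condition, $\mathbf{IPS}\Rightarrow\mathbf{CI}\Rightarrow\mathbf{SI}\Leftrightarrow\mathbf{RI}\Rightarrow\mathbf{WO}\Rightarrow\mathbf{CI}$, so all nine properties become equivalent, which is precisely what the dashed arrows are meant to express.

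I expect the delicate point to be the handling of $\mathbf{WO}$ against $\mathbf{WFO}$, because these are genuinely inequivalent in a bare pre-inductive Peano model: only $\mathbf{WO}\Rightarrow\mathbf{WFO}$ is free, and the reverse needs $<_X$ to be linear. Linearity is produced by $\mathbf{RI}$ (and in fact by $\mathbf{WO}$ itself, applied to two-element subsets), but not by the condition $X=\{0_X\}\cup S_X[X]$ on its own, so the route back up to $\mathbf{WO}$ under that condition cannot invoke Remark~\ref{Rem:PrinciplesForms}(\ref{Itm:Rem:PrinciplesForms:WOWFO}) directly; it must be funneled through $\mathbf{RI}$ (via $\mathbf{CI}\Rightarrow\mathbf{SI}\Leftrightarrow\mathbf{RI}$) to recover linearity first. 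Keeping these directions straight, so that no arrow silently assumes linearity where it is unavailable, is the only real bookkeeping hazard; every other arrow is a direct citation of an earlier lemma.
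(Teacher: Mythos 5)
Your proposal is correct and follows essentially the same route as the paper: every arrow is obtained by citing Theorem~\ref{T:GenInductionTheorem}, Remark~\ref{Rem:PrinciplesForms}, Lemma~\ref{TL:Minimality}, Theorems~\ref{T:SICI} and~\ref{T:RIeqSI}, and Lemma~\ref{TL:RIfacts}, with the $\mathbf{WO}$ arrows handled exactly as in the paper (minimum implies minimal for the solid direction, and recovering linearity via $\mathbf{RI}$ before applying Remark~\ref{Rem:PrinciplesForms}(\ref{Itm:Rem:PrinciplesForms:WOWFO}) for the dashed one). Your closing caution about not invoking $\mathbf{WFO}\Rightarrow\mathbf{WO}$ without first securing linearity is precisely the care the paper's proof of implications (I-18) and (I-19) takes.
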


\begin{figure}[ht]
  \centering
  \includegraphics[scale=0.5]{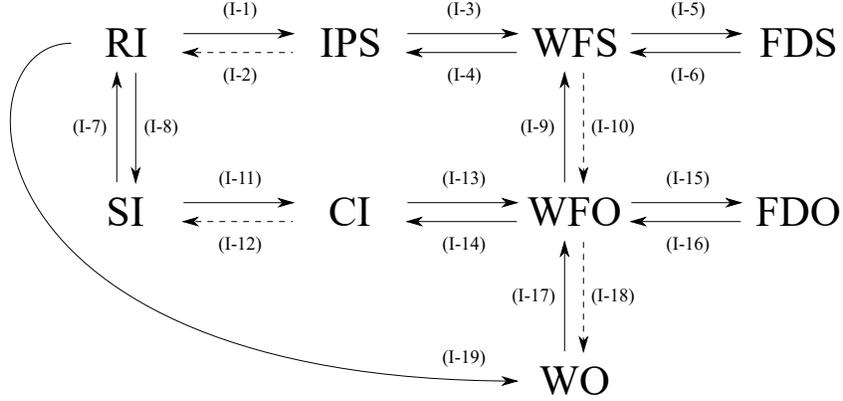}
  \caption{Implications between some forms of induction for $\mathbb{N}$: valid (solid lines) and ``almost valid'' (dashed lines). $\mathbf{IPS}$ is defined in the statement of Theorem~\ref{T:(Almost)ValidImplications}. The other principles are listed in Definition~\ref{D:IndPrinciples}.}\label{F:ArrowDiagram}
\end{figure}

\begin{proof}
\noindent
\begin{itemize}
  \item Implication (I-1): See Remark~\ref{Rem:PrinciplesForms}(\ref{Itm:Rem:PrinciplesForms:RInec}).
  \item Implication (I-2): See Remark~\ref{Rem:PrinciplesForms}(\ref{Itm:Rem:PrinciplesForms:RIsuf}).
  \item Implications (I-3), (I-4), (I-13), and (I-14): See Remark~\ref{Rem:PrinciplesForms}(\ref{Itm:Rem:PrinciplesForms:CIIPSform}) and Theorem~\ref{T:GenInductionTheorem}.
  \item Implications (I-5), (I-6), (I-15), and (I-16): See Remark~\ref{Rem:PrinciplesForms}(\ref{Itm:Rem:PrinciplesForms:FDWF}).
  \item Implications (I-7) and (I-8): See Theorem~\ref{T:RIeqSI}.
  \item Implications (I-9) and (I-10): See Lemma~\ref{TL:Minimality}.
  \item Implications (I-11) and (I-12): See Theorem~\ref{T:SICI}.
  \item Implication (I-17): Every minimum element with respect to $<_X$ is a minimal element  with respect to $<_X$, since $x \leq_X y$ implies $y \not<_X x$, by Definition~\ref{D:PreIndPeanoModel}(\ref{Itm:<Xlinord}).
  \item Implication (I-18): Suppose $X = \{0_X\} \cup S_X[X]$ and $\mathbf{WFO}$. Then, by implications (I-14), (I-12), and (I-7), we have $\mathbf{RI}$. Hence, by Lemma~\ref{TL:RIfacts}(\ref{Itm:CI0S=>LO}), $<_X$ is a linear strict order. From this, by Remark~\ref{Rem:PrinciplesForms}(\ref{Itm:Rem:PrinciplesForms:WOWFO}), it follows $\mathbf{WO}$.
  \item Implication (I-19): Suppose $\mathbf{RI}$. By implications (I-8), (I-11), and {(I-13)}, we have $\mathbf{WFO}$. On the other hand, by Lemma~\ref{TL:RIfacts}(\ref{Itm:RI->0S}), we have $X = \{0_X\} \cup S_X[X]$, and this makes implication (I-18) valid. Therefore, $\mathbf{WO}$ is true.\qedhere
\end{itemize}
\end{proof}

\section{Invalid Implications}

In this section, we disprove some implications between induction principles and related properties of pre-inductive Peano models. We begin by approaching the ``almost valid'' implications from the previous section.

\begin{theorem}\label{T:InvalidImplications}
The implications indicated by dashed arrows in Fig.~\ref{F:ArrowDiagram} are not valid in general.
\end{theorem}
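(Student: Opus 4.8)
The plan is to disprove each of the four dashed arrows of Figure~\ref{F:ArrowDiagram} separately --- namely $\mathbf{IPS}\Rightarrow\mathbf{RI}$ (I-2), $\mathbf{WFS}\Rightarrow\mathbf{WFO}$ (I-10), $\mathbf{CI}\Rightarrow\mathbf{SI}$ (I-12), and $\mathbf{WFO}\Rightarrow\mathbf{WO}$ (I-18) --- by exhibiting, for each, a pre-inductive Peano model in which $X\neq\{0_X\}\cup S_X[X]$, the hypothesis of the implication holds, and its conclusion fails. Throughout I would lean on the \emph{solid} (unconditional) equivalences already proved in Theorem~\ref{T:(Almost)ValidImplications}, so that in each model it suffices to verify one or two properties and transport the rest; in particular I would use $\mathbf{CI}\Leftrightarrow\mathbf{WFO}$, $\mathbf{IPS}\Leftrightarrow\mathbf{WFS}$, $\mathbf{RI}\Leftrightarrow\mathbf{SI}$, and $\mathbf{WFO}\Rightarrow\mathbf{WFS}$.

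For (I-2) and (I-12) I would reuse the model $\mathcal{P}=(X,0_X,S_X,<_X)$ with $X=A\cup B$ constructed on page~\pageref{Pg:Model1Begin}. There it was already shown that $\mathbf{CI}$ holds while $\mathbf{RI}$ fails, and one checks that $b_0\notin\{0_X\}\cup S_X[X]$, so the side condition is violated. Since $\mathbf{RI}\Leftrightarrow\mathbf{SI}$, the failure of $\mathbf{RI}$ gives the failure of $\mathbf{SI}$, and together with $\mathbf{CI}$ this refutes (I-12). For (I-2), the chain $\mathbf{CI}\Leftrightarrow\mathbf{WFO}\Rightarrow\mathbf{WFS}\Leftrightarrow\mathbf{IPS}$ shows that $\mathbf{IPS}$ holds while $\mathbf{RI}$ fails.

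For (I-18) I would take two incomparable copies of $\mathbb{N}$: set $X=\{0_X\}\cup\{S_X^{\,n}(0_X):n\geq 1\}\cup\{c\}\cup\{S_X^{\,n}(c):n\geq 1\}$ with $c$ a second non-successor, and declare $<_X$ to be the usual chain order within each of the two strands, with no element of one strand related to any element of the other. Since the order on each strand is the standard one and $0_X$ and $c$ both have empty down-sets, the axioms of Definition~\ref{D:PreIndPeanoModel} --- in particular the subordination condition~(\ref{Itm:<0Srelation}) --- are readily checked, and $c\notin\{0_X\}\cup S_X[X]$. Here every nonempty subset meets at most two well-ordered strands, hence has a $<_X$-minimal element, so $\mathbf{WFO}$ holds; but $\{0_X,c\}$ has no least element, so $\mathbf{WO}$ fails, refuting (I-18).

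The genuinely delicate case, which I expect to be the main obstacle, is (I-10): I must produce a model in which $<_X$ is \emph{not} well-founded yet $S_X$ \emph{is}, while respecting the tight coupling between $<_X$, $0_X$ and $S_X$ imposed by Definition~\ref{D:PreIndPeanoModel}(\ref{Itm:<0Srelation}). The idea is to use a descending spine of non-successor ``limits'' $\ell_0>_X\ell_1>_X\ell_2>_X\cdots$, letting each $\ell_i$ launch its own upward successor-chain $\ell_i<_X S_X(\ell_i)<_X S_X^{\,2}(\ell_i)<_X\cdots$, together with a separate strand $0_X<_X S_X(0_X)<_X\cdots$ incomparable to the spine. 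The crucial observation that makes this consistent is that condition~(\ref{Itm:X<SXp}) constrains only the down-sets of \emph{successors} and of $0_X$; the down-sets of the limits are free, so I may set $X_{<\ell_i}=\{\ell_j:j>i\}$ and check that this choice propagates correctly to $X_{<S_X^{\,n}(\ell_i)}=\{\ell_i,\dots,S_X^{\,n-1}(\ell_i)\}\cup\{\ell_j:j>i\}$ via the recursion $X_{<S_X(p)}=\{p\}\cup X_{<p}$. One then verifies that the resulting relation is a genuine strict partial order (being the transitive closure of an acyclic generating relation) and that every backward $S_X$-chain terminates at $0_X$ or at some $\ell_i$ in finitely many steps, so $\mathbf{WFS}$ (equivalently $\mathbf{FDS}$) holds; meanwhile $\{\ell_i:i\geq 0\}$ witnesses the failure of $\mathbf{WFO}$, and $X\neq\{0_X\}\cup S_X[X]$ because no $\ell_i$ is a successor. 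This same model in fact also refutes (I-2), since $\mathbf{IPS}\Leftrightarrow\mathbf{WFS}$ holds while $Y=\{0_X\}\cup\{S_X^{\,n}(0_X):n\geq 1\}$ witnesses the failure of $\mathbf{RI}$.
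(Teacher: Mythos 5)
Your proposal is correct and follows essentially the same route as the paper: the $\omega+\omega$ model from the Introduction is reused for (I-2) and (I-12), two incomparable copies of $\mathbb{N}$ refute (I-18) (the paper's Fig.~\ref{F:omega1Uomega2} model), and your ``descending spine of non-successor limits, each launching a successor chain'' is isomorphic to the paper's $\mathbb{N}\times\mathbb{N}$ counterexample for (I-10) via $\ell_i\leftrightarrow(i+1,0)$, $S_X^{\,n}(\ell_i)\leftrightarrow(i+1,n)$. The verifications you sketch (closure of the generating relation, the down-set recursion, termination of backward $S_X$-chains) match the paper's, so there is no gap.
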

\begin{proof}
We exhibit a counterexample to each one. For $\mathbf{CI} \,\Rightarrow\, \mathbf{SI}$ and $\mathbf{IPS} \,\Rightarrow\, \mathbf{RI}$, the same model defined in the Introduction (pp.~\pageref{Pg:Model1Begin}-\pageref{Pg:Model1End}) serves, because we have seen that it is a counterexample for $\mathbf{CI} \Rightarrow \mathbf{RI}$, whereas $\mathbf{RI}$ is equivalent to $\mathbf{SI}$ and $\mathbf{CI}$ implies $\mathbf{IPS}$. It remains to disprove $\mathbf{WFS} \Rightarrow \mathbf{WFO}$ and $\mathbf{WFO} \Rightarrow \mathbf{WO}$.
\begin{figure}[ht]
  \centering
  \includegraphics[scale=0.6]{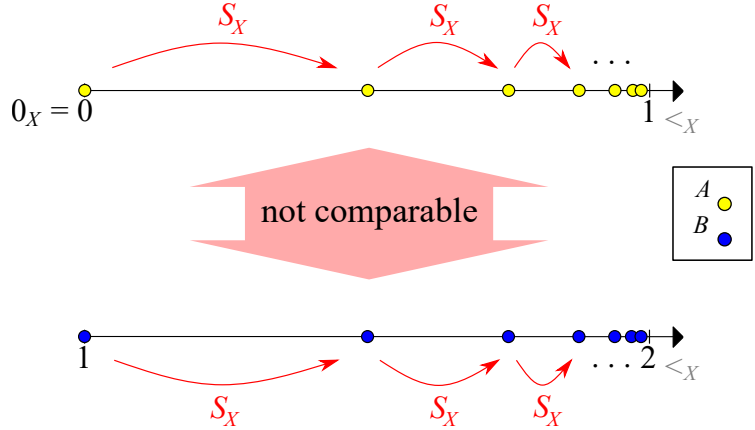}
  \caption{A counterexample for the implication $\text{`well-foundedness of order'} \Rightarrow \text{`well-ordering'}$. This model is a variant of that from Fig.~\ref{F:omegaplusomega}, with the same zero, the same successor function, but now the order is not linear. Instead, no element of $A$ is comparable with any element of $B$.}\label{F:omega1Uomega2}
\end{figure}
  \begin{itemize}
    \item $\mathbf{WFO} \Rightarrow \mathbf{WO}$: Consider again the same model constructed in the Introduction (pp.~\pageref{Pg:Model1Begin}-\pageref{Pg:Model1End}) but with its order relation $<_X$ modified so that no element of $A$ is comparable with any element of $B$ (see Fig.~\ref{F:omega1Uomega2}). It is straightforward to verify that this $\mathcal{P} = (X,0_X,S_X,<_X$) satisfies the conditions of Definition~\ref{D:PreIndPeanoModel}. Observe that inside $A$ or inside $B$, every nonempty set has a minimum element with respect to $A$ or $B$, respectively, and this relative minimum is also a relative minimal. Since we are not allowed to compare elements of $A$ with elements of $B$, the relative minimal is also a minimal with respect to the whole $X = A \cup B$. The existence of minimal element is also verified for all $Y \subseteq X$ with $A \cap Y \neq \varnothing$ and $B \cap Y \neq \varnothing$, because, in this case, there are relative minimal elements $m_1 \in A \cap Y$ and $m_2 \in B \cap Y$ and these must be absolute minimal as well. Thus, $\mathcal{P}$ satisfies $\mathbf{WFO}$. But it does not satisfy $\mathbf{WO}$, because for all $a \in A$ and all $b \in B$, the nonempty subset $\{a,b\} \subseteq X$ has no minimum element. Therefore, the implication $\mathbf{WFO} \Rightarrow \mathbf{WO}$ is false for $\mathcal{P}$.
    \item $\mathbf{WFS} \Rightarrow \mathbf{WFO}$: Consider the sets $A_n = \{ (n,k) \mid k \in \mathbb{N}\}$, with $n \in \mathbb{N}$, and $X = \bigcup_{n \in \mathbb{N}}A_n = \mathbb{N}\times \mathbb{N}$. Define the zero of $X$ by $0_X = (0,0)$. Define the successor function of $X$ by $S_X(n,k) = (n, k+1)$, for all $n,k \in \mathbb{N}$. And define the order of $X$ by
        \begin{multline*}
          (n_1, k_1) <_X (n_2, k_2) \\
          \Leftrightarrow \quad ((n_1 = n_2)\wedge(k_1 < k_2))\vee(k_1=0 < n_2 < n_1)),
        \end{multline*}
        for all $(n_1, k_1),(n_2, k_2) \in X$ (see Fig.~\ref{F:NxN}). It is straightforward to verify that this $\mathcal{P} = (X,0_X,S_X,<_X$) satisfies the conditions of Definition~\ref{D:PreIndPeanoModel}. Now, observe that $\mathbf{WFS}$ does not hold for $\mathcal{P}$ if and only if there is a nonempty $Y \subseteq X$ such that $Y \subseteq S_X[Y]$. So, in order to prove that $\mathcal{P}$ satisfies $\mathbf{WFS}$, it suffices to prove that  there is no $Y \subseteq X$ such that $\varnothing \neq Y \subseteq S_X[Y]$. But, for all $Y \subseteq X$, if $\varnothing \neq Y \subseteq S_X[Y]$, then, by applying the second projection on $Y$ and $S_X[Y]$, we have
        \[\varnothing \neq \underbrace{\{k \in \mathbb{N} \mid \exists n \in \mathbb{N},\ (n,k)\in Y\}}_{U} \subseteq \{k' + 1 \mid k' \in U\}, \]
        so that there is a nonempty $U \subseteq \mathbb{N}$ such that, for all $k \in U$, there is a $k' \in U$ such that $k = k' + 1$. This is impossible, because $\mathbb{N}$ satisfies $\mathbf{RI}$ and, hence, $\mathbf{WFS}$. So, by \emph{reductio ad absurdum}, there is no $Y \subseteq X$ such that $\varnothing \neq Y \subseteq S_X[Y]$. Equivalently, $\mathcal{P}$ satisfies $\mathbf{WFS}$. But it does not satisfy $\mathbf{WFO}$, because the subset $\{(n,0)\mid n \in \mathbb{N}\} \subseteq X$ is nonempty and has no minimal element: for all nonzero $n \in \mathbb{N}$, there is a $n' = n + 1$ such that $(n', 0) <_X (n, 0)$. In fact, the function $j \mapsto (j+1,0)$, from $\mathbb{N}$ to $X$, is strictly decreasing; $<_X$ was defined so that this happens. Therefore, the implication $\mathbf{WFS} \Rightarrow \mathbf{WFO}$ is false for $\mathcal{P}$.\qedhere
  \end{itemize}
\end{proof}

\begin{figure}[ht]
  \centering
  \includegraphics[scale=0.6]{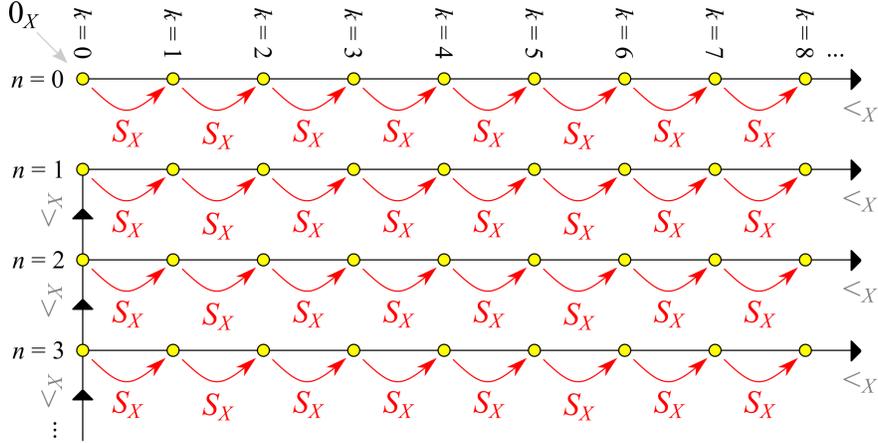}
  \caption{A counterexample for the implication `well-foundedness of successor' $\Rightarrow$ `well-foundedness of order'. $X = \{(n,k) \in \mathbb{N} \mid n,k \in \mathbb{N}\}$. The zero of this model is $0_X = (0,0)$.}\label{F:NxN}
\end{figure}

\begin{figure}[ht]
  \centering
  \includegraphics[scale=0.44]{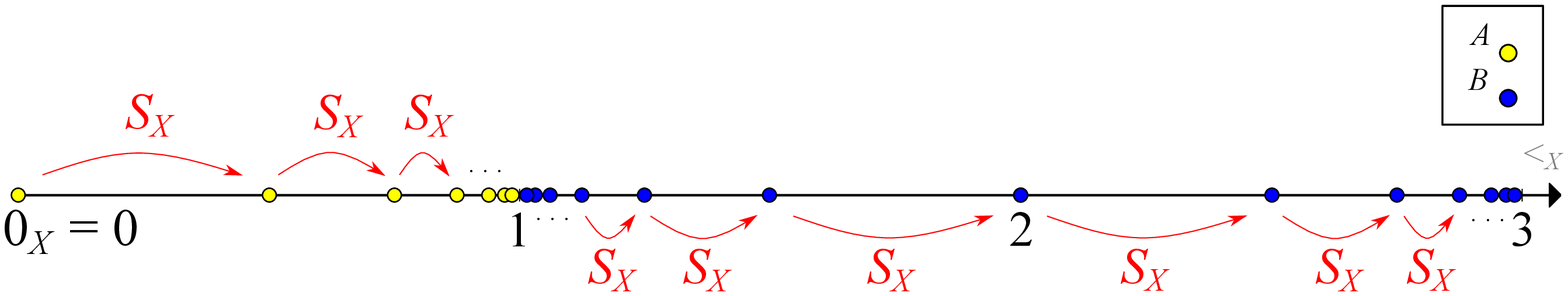}
  \caption{A pre-inductive Peano model that satisfies $X = \{0_X\} \cup S_X[X]$ but its successor function is not well-founded. Hence, it does not satisfy any induction principle listed in Definition~\ref{D:IndPrinciples} or represented in Fig.~\ref{F:ArrowDiagram}.}\label{F:omegaplusZ}
\end{figure}

\begin{remark}\label{Obs:0SAlone}
Although the property $X = \{0_X\} \cup S_X[X]$ is an indispensable condition to the validity of the ``almost valid'' implications in Fig.~\ref{F:ArrowDiagram}, it alone does not guarantee the truth of any of the induction principles shown in that diagram. To prove it, consider the sets
\begin{align}
  A = \{ a_n \mid n \in \mathbb{N} \} \subset \mathbb{Q}, \qquad &\text{where } a_n = -1 + \sum_{k = 0}^{n}\frac{1}{2^{k}}, \label{E:0SnotimpWFS:A} \\
  B = \{ b_n \mid n \in \mathbb{N} \} \subset \mathbb{Q}, \qquad &\text{where } b_n = \begin{cases}
                                                                    1 + \sum_{k = 0}^{n}1/2^{k}  & \mbox{if } n \geq 0, \\
                                                                    3 - \sum_{k = 0}^{-n}1/2^{k} & \mbox{if } n < 0,
                                                                   \end{cases}\label{E:0SnotimpWFS:B}
\end{align}
\begin{equation}\label{E:0SnotimpWFS:X}
X = A \cup B,
\end{equation}
and the function
\begin{align}
  S_X: X &\to X \notag \\
      x &\mapsto S_X(x) = \begin{cases}
                          a_{n+1} & \mbox{if } x = a_n, \\
                          b_{n+1} & \mbox{if } x = b_n
                        \end{cases} \label{E:0SnotimpWFS:S}
\end{align}
(see Fig.~\ref{F:omegaplusZ}). Observe that the structure $\mathcal{P} = (X,0_X,S_X,<_X)$ is a pre-inductive Peano model that does not satisfy $\mathbf{WFS}$: its nonempty subset $B$ is not well-founded with respect to $S_X$. Hence, this $\mathcal{P}$ does not satisfy any other induction principle or related property represented in Fig.~\ref{F:ArrowDiagram}.
\end{remark}

\section{Weakening the Assumptions}

The conditions in Definition~\ref{D:PreIndPeanoModel}(d) may not seem natural, at first glance, to some readers. Because of this, in this section, we investigate which implications in Fig.~\ref{F:ArrowDiagram} are still valid or ``almost valid'' if we weaken that clauses.

\begin{definition}\label{D:SubIndPeanoModel}
A \emph{sub-inductive Peano model} is a set $X$ equipped with an element $0_X$, called the \emph{zero} of $X$, a function $S_X: X \to X$, called the \emph{successor function} of $X$, and a binary relation $<_X$ on $X$, called the \emph{order} of $X$, such that $S_X$ is injective, $0_X$ is not in the image of $S_X$ and $<_X$ is a strict order relation.
\end{definition}

So, a pre-inductive Peano model is a sub-inductive Peano model that satisfies part~(\ref{Itm:<0Srelation}) of Definition~\ref{D:PreIndPeanoModel}.

As previously, we use the notations $S_X[X] = \{S_X(x) \mid x \in X\}$, $X_{\square p} = \{x \in X \mid x \ \square_X\  p\}$, etc. Also, we reconsider the properties defined in Definition~\ref{D:IndPrinciples}, as well that in~\eqref{E:IPS}, as being applicable to sub-inductive Peano models. In Fig.~\ref{F:ArrowDiagramMod1}, solid, smooth arrows represent valid simplications, dashed arrows represent ``almost valid'' implications (i.e., implications that are valid if $X = \{0_X\} \cup S_X[X]$), and crossed arrows represent invalid (and not ``almost valid'') implications.

\begin{figure}[ht]
  \centering
  \includegraphics[scale = 0.5]{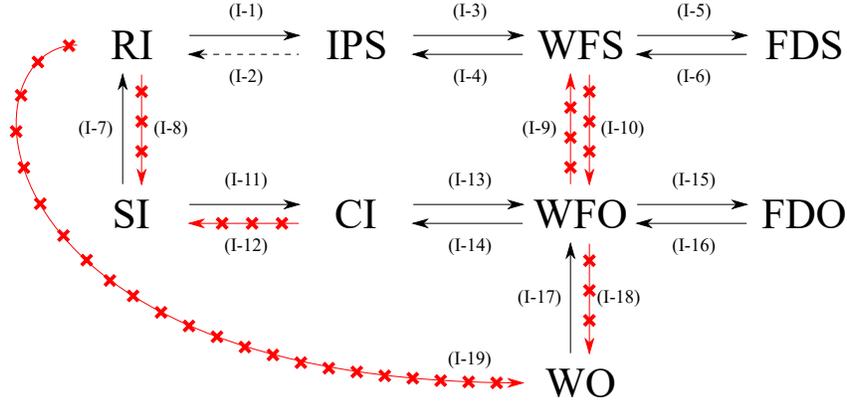}
  \caption{Valid, invalid and ``almost valid'' implications between the nine properties previously studied, in the case when $S_X: X \to X$ is injective, $0_X \in X - S_X[X]$, and $<_X$ is a strict order, without further assumptions. Here, conditions in Definition~\ref{D:PreIndPeanoModel}(\ref{Itm:<0Srelation}) are not necessarily valid.}\label{F:ArrowDiagramMod1}
\end{figure}

\begin{theorem}
For sub-inductive Peano models, the implications between the induction principles $\mathbf{RI}$, $\mathbf{SI}$, $\mathbf{CI}$, $\mathbf{IPS}$, $\mathbf{WFS}$, $\mathbf{WFO}$, $\mathbf{FDS}$, $\mathbf{FDO}$, and $\mathbf{WO}$ are as Fig.~\ref{F:ArrowDiagramMod1} shows.
\end{theorem}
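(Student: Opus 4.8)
The plan is to treat the theorem as a systematic re-audit of the nineteen arrows of Theorem~\ref{T:(Almost)ValidImplications}, re-deciding each one under the weaker hypotheses of Definition~\ref{D:SubIndPeanoModel}. The organizing principle is to inspect, for every earlier argument, whether it invoked part~(\ref{Itm:<0Srelation}) of Definition~\ref{D:PreIndPeanoModel} or only injectivity of $S_X$, the condition $0_X\notin S_X[X]$, and the strict-order axioms. Dropping~(\ref{Itm:<0Srelation}) entirely \emph{decouples} $<_X$ from $0_X$ and $S_X$, so any proof that never touched the subordination clauses transfers verbatim, whereas any proof that used them must be re-established by a new argument or refuted by a counterexample. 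The guiding heuristic for the counterexamples is that, with the order now free, one may mount a pathological order on a tame successor spine, or a pathological successor structure under a trivial order.

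First I would collect the survivors, which account for the solid and dashed arrows. Theorem~\ref{T:GenInductionTheorem} and Remark~\ref{Rem:PrinciplesForms}(\ref{Itm:Rem:PrinciplesForms:FDWF}) are pure logic on an abstract relation $\square_X$, so the two clusters $\mathbf{CI}\Leftrightarrow\mathbf{WFO}\Leftrightarrow\mathbf{FDO}$ and $\mathbf{IPS}\Leftrightarrow\mathbf{WFS}\Leftrightarrow\mathbf{FDS}$ persist unchanged. Remark~\ref{Rem:PrinciplesForms}(\ref{Itm:Rem:PrinciplesForms:RInec}) uses only injectivity and $0_X\notin S_X[X]$, so $\mathbf{RI}\Rightarrow\mathbf{IPS}$ remains valid; Remark~\ref{Rem:PrinciplesForms}(\ref{Itm:Rem:PrinciplesForms:RIsuf}) needs only $X=\{0_X\}\cup S_X[X]$, so $\mathbf{IPS}\Rightarrow\mathbf{RI}$ remains ``almost valid''. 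The direction $\mathbf{SI}\Rightarrow\mathbf{RI}$ of Theorem~\ref{T:RIeqSI} uses only the identity $X_{\leq p}=X_{<p}\cup\{p\}$, and implication (I-17), $\mathbf{WO}\Rightarrow\mathbf{WFO}$, uses only irreflexivity and transitivity; both carry over. All of these are routine re-checks. It is also worth recording here the clean characterization $\mathbf{RI}\Leftrightarrow\bigl(X=\{S_X^{\,n}(0_X)\mid n\geq0\}\bigr)$, since the set $\{0_X\}\cup S_X[X]$ used in Lemma~\ref{TL:RIfacts}(\ref{Itm:RI->0S}) is closed under $S_X$ regardless of~(\ref{Itm:<0Srelation}); this will be the lever for the genuinely new step.

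The hard part is $\mathbf{SI}\Rightarrow\mathbf{CI}$ (implication (I-11)). Its proof in Theorem~\ref{T:SICI}(a) rests squarely on~(\ref{Itm:<0Srelation}): it rewrites ``$0_X\in Y$'' via $X_{<0_X}=\varnothing$ and converts the successor step via $X_{<S_X(p)}=X_{\leq p}$, both of which collapse without the subordination axioms. Yet the implication remains true and must be re-proved structurally. I would analyze the least subset $Y^\ast\subseteq X$ that contains $0_X$ and is closed under the $\mathbf{SI}$-rule ``$X_{\leq p}\subseteq Y\Rightarrow S_X(p)\in Y$''; since $\mathbf{SI}$ asserts exactly that every such $Y$ equals $X$, we have $\mathbf{SI}\Leftrightarrow(Y^\ast=X)$. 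Because adding $S_X(p)$ presupposes $p\in Y^\ast$ (as $p\in X_{\leq p}$), the closure cannot escape the orbit $\{S_X^{\,n}(0_X)\mid n\geq0\}$, so $\mathbf{SI}$ forces $X=\{S_X^{\,n}(0_X)\}$. Writing $x_n=S_X^{\,n}(0_X)$, injectivity makes $x_{n+1}$ enterable only through $x_n$, so the closure adds the $x_n$ strictly in index order without skipping; hence $\mathbf{SI}$ is equivalent to $X=\{S_X^{\,n}(0_X)\}$ together with the refinement condition $x_m<_X x_n\Rightarrow m<n$. That refinement makes the index-least member of any nonempty $Y\subseteq X$ automatically $<_X$-minimal, yielding $\mathbf{WFO}=\mathbf{CI}$. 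I expect the delicate points to be justifying the ``sequential, no-skipping'' entry of the $x_n$ and deducing well-foundedness from the refinement condition; this reachability-plus-ordering analysis is where the real work lies.

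Finally I would falsify the remaining arrows (I-8, I-9, I-10, I-12, I-18, I-19), which were solid or dashed before but are now crossed, by exhibiting sub-inductive models in which the order is decoupled from the successor, each arranged so that $X=\{0_X\}\cup S_X[X]$ holds (hence the failures are not even ``almost valid''). Taking $X=\mathbb{N}$ with the standard successor and the \emph{empty} order satisfies $\mathbf{RI}$, $\mathbf{WFO}$, and $\mathbf{CI}$ but not $\mathbf{WO}$ (no minimum of $\{0,1\}$), refuting $\mathbf{RI}\Rightarrow\mathbf{WO}$ and $\mathbf{WFO}\Rightarrow\mathbf{WO}$; taking $X=\mathbb{N}$ with the standard successor and an order that does not refine the index order (for instance the single relation $x_2<_X x_1$) gives $\mathbf{RI}$ without $\mathbf{SI}$, and the \emph{reverse} usual order gives $\mathbf{WFS}$ without $\mathbf{WFO}$, refuting $\mathbf{RI}\Rightarrow\mathbf{SI}$ and $\mathbf{WFS}\Rightarrow\mathbf{WFO}$; and taking the disjoint union $X=\mathbb{N}\sqcup\mathbb{Z}$ with standard successors on each summand and the empty order gives $\mathbf{WFO}$ (hence $\mathbf{CI}$) while the bi-infinite $\mathbb{Z}$-chain destroys $\mathbf{WFS}$ and $\mathbf{SI}$, refuting $\mathbf{WFO}\Rightarrow\mathbf{WFS}$ and $\mathbf{CI}\Rightarrow\mathbf{SI}$. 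Verifying in each case that $S_X$ is injective with $0_X$ outside its image, that $<_X$ is a strict order, and that $X=\{0_X\}\cup S_X[X]$, closes exactly the classification asserted by Fig.~\ref{F:ArrowDiagramMod1}.
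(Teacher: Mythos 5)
Your proposal is correct, and its positive half is at its core the same argument the paper gives. For the one implication whose earlier proof breaks, $\mathbf{SI}\Rightarrow\mathbf{CI}$, the paper likewise first extracts $\mathbf{RI}$ (hence $(X,0_X,S_X)\cong(\mathbb{N},0,S)$, with canonical order $\prec_X$) and then proves exactly your ``refinement condition'' in the form $X_{<p}\subseteq X_{\prec p}$, by feeding the set $Y=X_{\preceq q}$ to $\mathbf{SI}$ and deriving a contradiction; it then concludes via finiteness of each $X_{<p}$ and $\mathbf{FDO}$, where you go directly to $\mathbf{WFO}$ via index-least elements --- an inessential difference. The step you flag as delicate (that the closure of $\{0_X\}$ under the $\mathbf{SI}$-rule enters the orbit in index order, so that $x_m<_Xx_n$ forces $m<n$) is precisely the step the paper discharges with the explicit witness $Y=\{x_0,\dots,x_n\}=X_{\preceq x_n}$, which would violate $\mathbf{SI}$ unless the refinement holds; your sketch is completable by the same device. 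Where you genuinely diverge is in the counterexamples: the paper uses the reversed order on $\mathbb{N}$ (against $\mathbf{RI}\Rightarrow\mathbf{SI}$, $\mathbf{WFS}\Rightarrow\mathbf{WFO}$, $\mathbf{RI}\Rightarrow\mathbf{WO}$), the empty order on $\mathbb{N}$ (against $\mathbf{WFO}\Rightarrow\mathbf{WO}$), a ``braid'' structure on $\{(n,\pm n)\}$ (against $\mathbf{CI}\Rightarrow\mathbf{SI}$), and a modified $\omega+\mathbb{Z}$ (against $\mathbf{WFO}\Rightarrow\mathbf{WFS}$), whereas your single-pair order $x_2<_Xx_1$ and your $\mathbb{N}\sqcup\mathbb{Z}$ with the empty order are simpler models refuting the same six arrows; since each of your structures satisfies the sub-inductive axioms together with $X=\{0_X\}\cup S_X[X]$, they establish, as the paper's do, that these failures are not even ``almost valid,'' and they do so rather more economically.
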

\begin{proof}
The proofs of valid or ``almost valid'' implications are the same as we have made in previous sections, except for the implication (I-19): $\mathbf{SI} \Rightarrow \mathbf{CI}$. The proof of this one must be adapted as follows.

Suppose $\mathcal{S}$ satisfies $\mathbf{SI}$. Since $\mathbf{SI} \Rightarrow \mathbf{RI}$, we have that $\mathcal{S}$ satisfies $\mathbf{RI}$ and, hence, $(X,0_X, S_X)$ is isomorphic to $(\mathbb{N}, 0, S)$. So, $X$ has an order relation $\prec_X$ such that $(X, \prec_X)$ is isomorphic to $(\mathbb{N}, <)$, where $<$ is the usual order of $\mathbb{N}$. More explicitly, $\prec_X$ is recursively defined by
\[ \forall x_1, x_2 \in X, \quad x_1 \prec_X x_2 \quad \Leftrightarrow \quad (\exists x \in X, \ (x_1 \preceq_X x)\wedge(S_X(x) = x_2)),\]
where $x_1 \preceq_X x$ denotes $(x_1 \prec_X x) \vee (x_1 = x)$.

Now, observe that
\begin{equation}\label{E:X<pSubseteqXprecp}
  \forall p \in X, \quad X_{<p} \subseteq X_{\prec p}.
\end{equation}
In fact, suppose, towards a contradiction, that some $q \in X$ does not satisfy $X_{<q} \subseteq X_{\prec q}$. Then, there is a $Y = X_{\preceq q} \subseteq X$, such that $0_X \in Y$ and, for all $p \in X$, with $X_{\leq p} \subseteq Y$,
\[\{p\} \subseteq \{p\} \cup X_{< p} = X_{\leq p} \subseteq Y = X_{\preceq q},\]
so that $p \prec_X q$ or $p = q$. But, since $X_{<q} \not\subseteq X_{\prec q}$, we must have $p \neq q$: otherwise, $X_{\leq p} = X_{\leq q} \not\subseteq X_{\preceq q} = Y$. Then, there is a $Y = X_{\preceq q}  \subseteq X$ such that $0_X \in Y$ and, for all $p \in X$,
\begin{align*}
  X_{\leq p} \subseteq Y \quad &\Rightarrow \quad p \prec_X q \quad \Rightarrow \quad S_X(p) \preceq_X q\\
                               &\Rightarrow \quad S_X(p) \in X_{\preceq q} = Y,
\end{align*}
but $Y \neq X$. This contradicts $\mathbf{SI}$. So, by \emph{reductio ad absurdum}, the hypothesis that $\exists q \in X, \ X_{<q} \not\subseteq X_{\prec q}$ is false. We have proved~\eqref{E:X<pSubseteqXprecp}.

Since $(X, \prec_X)$ is isomorphic to $(\mathbb{N}, <)$, we have that $X_{\prec p}$ is finite, for all $p \in X$. Then, by \eqref{E:X<pSubseteqXprecp}, $X_{< p}$ is finite, for all $p \in X$. Therefore, $\mathcal{S}$ satisfies $\mathbf{FDO}$. From this, by $\mathbf{FDO} \Rightarrow \mathbf{WFO} \Rightarrow \mathbf{CI}$, it follows that $\mathcal{S}$ satisfies $\mathbf{CI}$.

Finally, we exhibit the counterexamples for the false (and not ``almost valid'') implications.
\begin{itemize}
  \item Implications (I-8), (I-10), and (I-19): Consider, as a counterexample, the structure $\mathcal{S} = (X, 0_X, S_X, <_X)$, where $X = \mathbb{N}$, $0_X = 0$ (the natural number zero), $S_X = S$ (the usual successor function on $\mathbb{N}$), and $<_X$ is the binary relation on $X$ defined by $x <_X y \, \Leftrightarrow \, y < x$, for all $x, y \in \mathbb{N}$, where $<$ is the usual order of $\mathbb{N}$. In other words, $<_X$ is the dual (opposite) order of $<$. See Fig.~\ref{F:NreverseOrder}.
  \item Implication (I-12): Consider, as a counterexample, the structure $\mathcal{S} = (X, 0_X, S_X, <_X)$, where $X = \{(n, \pm n) \mid n \in \mathbb{N}\} \subseteq \mathbb{N} \times \mathbb{Z}$, $0_X = (0, 0)$, $S_X: X \to X$ is given by
      \[S_X(x,y) = \begin{cases}
                    (2,-2)     & \mbox{if } y = 0, \\
                    (x,-y)     & \mbox{if } y < 0, \\
                    (x-1,1-x)  & \mbox{if } x \text{ is even and } y >0, \\
                    (x+3,-x-3) & \mbox{if } x \text{ is odd and  } y >0,
                  \end{cases}\]
      and $<_X$ is a binary relation on $X$ defined by
      \[(x_1, y_1) <_X (x_2, y_2) \quad \Leftrightarrow \quad ((y_1y_2 > 0) \wedge (x_1 < x_2)),\]
      for all $(x_1,y_1), (x_2,y_2) \in X$. See Fig.~\ref{F:Braid}.
  \item Implication (I-18): Consider, as a counterexample, the structure $\mathcal{S} = (X, 0_X, S_X, <_X)$, where $X = \mathbb{N}$, $0_X = 0$ (the natural number zero), $S_X = S$ (the usual successor function on $\mathbb{N}$), and $<_X$ is the empty relation. Alternatively, we could define $<_X$ as being any other subrelation of $<$ (the usual order or $\mathbb{N}$) such that some $x_1, x_2 \in X$ are not comparable with respect to $<_X$. See Fig.~\ref{F:NincompleteOrder}.
  \item Implication (I-9): Consider, as a counterexample, the structure $\mathcal{S} = (X, 0_X, S_X, <_X)$, where $X$, $0_X$, $S_X$ are defined as in Remark~\ref{Obs:0SAlone} and $<_X$ is constructed by modifying that order from Remark~\ref{Obs:0SAlone} so that the predecessors of $2$ (with respect to $S_X$) become incomparable, one to each other, with respect to $<_X$. See Fig.~\ref{F:omegaplusZmod}.
\end{itemize}
\end{proof}

\begin{figure}[ht]
  \centering
  \includegraphics[width = 10cm]{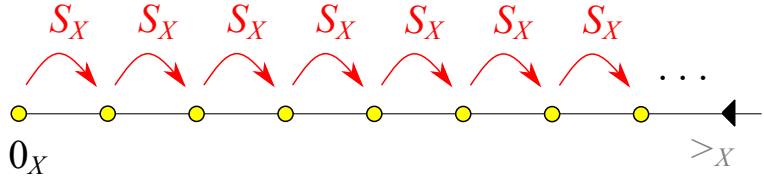}
  \caption{Counterexample for implications (I-8), (I-10), and (I-19) in Fig.~\ref{F:ArrowDiagramMod1}.}\label{F:NreverseOrder}
\end{figure}

\begin{figure}[ht]
  \centering
  \includegraphics[width = 8cm]{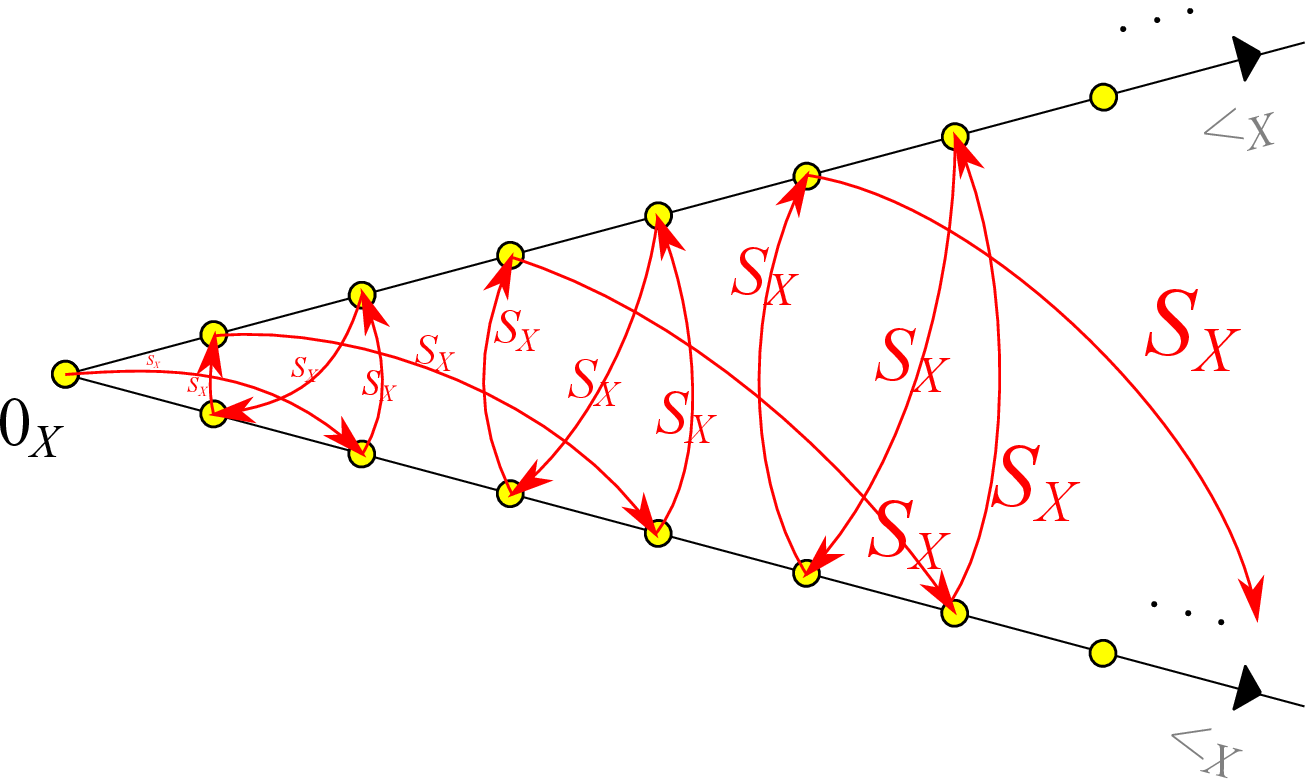}
  \caption{Counterexample for implication (I-12) in Fig.~\ref{F:ArrowDiagramMod1}.}\label{F:Braid}
\end{figure}

\begin{figure}[ht]
  \centering
  \includegraphics[width = 10cm]{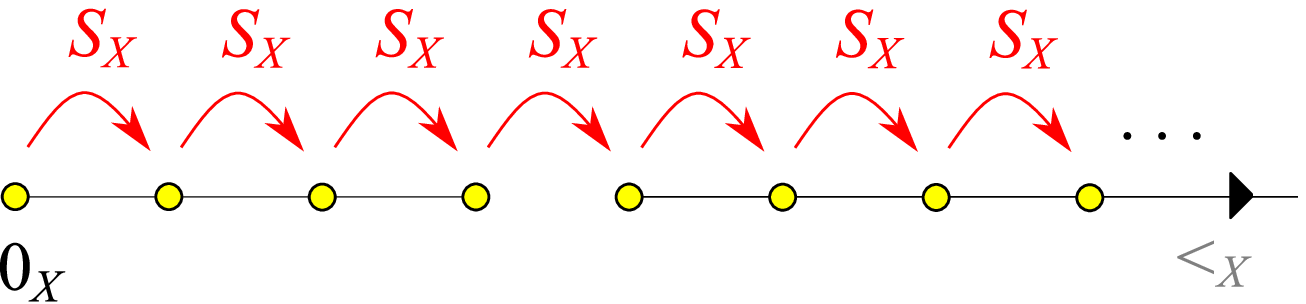}
  \caption{Counterexample for implication (I-18) in Fig.~\ref{F:ArrowDiagramMod1}.}\label{F:NincompleteOrder}
\end{figure}

\begin{figure}[ht]
  \centering
  \includegraphics[width = 12cm]{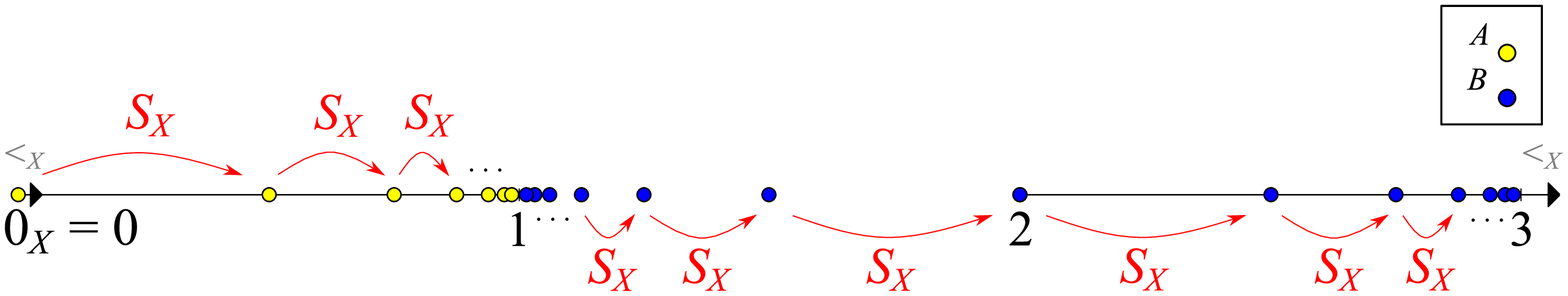}
  \caption{Counterexample for implication (I-9) in Fig.~\ref{F:ArrowDiagramMod1}.}\label{F:omegaplusZmod}
\end{figure}

\section{Conclusion} Although it is often said otherwise, complete induction and well-ordering are not equivalent to regular (ordinary) induction. The equivalence holds only if we assume, additionally, that every nonzero natural number is a successor. On the other hand, strong induction, as defined in this paper, is indeed equivalent to regular induction. We hope this work can shed light on the need for more caution when alleging equivalence between induction principles for $\mathbb{N}$.


\begin{thebibliography}{10}

\bibitem{BCC08}
Baldoni, M.W., Ciliberto, C., Cattaneo, G.M.P. (2008). \emph{Elementary Number Theory, Cryptography and Codes}, eISBN 978-3-540-69200-3. (Universitext) Roma, Italy: Springer-Verlag.

\bibitem{PJC98}
Cameron, P.J. (1998). \emph{Introduction to Algebra}, ISBN 0-19-850195-1. (Oxford Science Publications) New York, NY: Oxford University Press.

\bibitem{LNC08}
Childs, L.N. (2008). \emph{A Concrete Introduction to Higher Algebra}, 3rd. ed, ISBN: 978-0-387-74527-5. (Undergraduate Texts in Mathematics) Albany, NY: Springer.

\bibitem{PE82}
Ernest, P. (1982) Mathematical induction: a recurring theme. \emph{The Mathematical Gazette} 66 (436): 120-125. \url{https://doi.org/10.2307/3617747}

\bibitem{OMC}
Oxford Math Center, The. Proof of the Equivalence of Strong \& Regular Induction. \url{http://www.oxfordmathcenter.com/drupal7/node/485} (Accessed on \today.)

\bibitem{JCS69}
Shepherdson, J.C. (1969) Weak and strong induction. \emph{The American Mathematical Monthly} 76 (9): 989-1004. \url{https://doi.org/10.1080/00029890.1969.12000392}

\bibitem{RS14}
Smullyan, R.M. (2014). \emph{A Beginner's Guide to Mathematical Logic}, Kindle Edition, eISBN-13 978-0-486-78297-3. (Dover Books on Mathematics) New York, NY: Dover.

\bibitem{HHS12}
Sohrab, H.H. (2012). \emph{Basic Real Analysis}, ISBN-13: 978-1-4612-6503-0. Towson, MD: Birkhäuser.

\bibitem{SHW17}
Weintraub, S.H. (2017) \emph{The Induction Book}, ISBN-13: 978-0-486-81199-4. (Aurora: Dover Modern Math Originals) New York, NY: Dover.


\end{thebibliography}
\end{document}